\newtheorem{theorem}{Theorem}
\newtheorem*{theorem*}{Theorem}
\newtheorem{lemma}[theorem]{Lemma}
\newtheorem*{lemma*}{Lemma}
\newtheorem{proposition}[theorem]{Proposition}
\newtheorem*{proposition*}{Proposition}
\theoremstyle{definition}
\theoremstyle{remark}
\newcommand{\add}{\operatorname{add}}
\newcommand{\sub}{\operatorname{sub}}
\newcommand{\Disc}{\operatorname{Disc}}
\newcommand{\RE}{\operatorname{Re}}
\newcommand{\vol}{\operatorname{vol}}
\newcommand{\Res}{\operatorname{Res}}
\newcommand{\E}{\mathbf{E}}     
\newcommand{\one}{\mathbf{1}}
\newcommand{\Stab}{\operatorname{Stab}}
\newcommand{\bR}{\mathbb{R}}
\newcommand{\zed}{\mathbb{Z}}
\newcommand{\GL}{\mathrm{GL}}
\newcommand{\SL}{\mathrm{SL}}
\newcommand{\SO}{\mathrm{SO}}
\newcommand{\sC}{{\mathscr{C}}}
\newcommand{\sF}{{\mathscr{F}}}
\newcommand{\sL}{{\mathscr{L}}}
\newcommand{\sS}{{\mathscr{S}}}
\title{Subconvexity of Shintani's zeta function}
\author{Robert D. Hough}
\address{Department of Mathematics, Stony Brook University, 100 Nicolls Road, 
Stony Brook, NY 11794}
\email{robert.hough@stonybrook.edu}
\author{Eun Hye Lee}
\address{Department of Mathematics, Stony Brook University, 100 Nicolls Road, 
Stony Brook, NY 11794}
\email{eunhye.lee@stonybrook.edu }
\subjclass[2010]{Primary 11N45, 11N64, 11M41, 11F12, 11H06, 11E45, 12F05, 43A85, 42B20}
 \keywords{Subconvexity, cubic ring, space of lattices, 
zeta function, prehomogeneous vector space, oscillatory integral}
\begin{document}
 
 \begin{abstract}
  Enumerating integral orbits in prehomogeneous vector spaces plays an important role in arithmetic statistics.  We describe a method of proving subconvexity of the zeta function enumerating the integral orbits, illustrated by proving a subconvex estimate for the  Shintani $\zeta$ function enumerating class numbers of binary cubic forms.  
 \end{abstract}

 \thanks{This material is based upon work supported by the National Science
Foundation under agreement DMS-1802336. Any opinions, findings and
conclusions or recommendations expressed in this material are those of the
authors and do not necessarily reflect the views of the National Science
Foundation.}

\thanks{Robert Hough is supported by an Alfred P. Sloan Foundation Research 
Fellowship and a Stony Brook Trustees Faculty Award}
\maketitle

\section{Introduction}
The subconvexity problem is one of the important problems in the theory of zeta and $L$-functions.  The problem asks for a power saving estimate for the zeta or $L$-function on the critical line, compared to the bound obtained by interpolation between the regions of absolute convergence of the Dirichlet series using the functional equation. The end goal of the subconvexity problem is the Lindel\"{o}f Hypothesis, a consequence of the Riemann Hypothesis which has powerful analytic applications, see \cite{M18} and references therein.  An important class of zeta functions in modern analytic number theory are the zeta functions developed by M. Sato and Shintani enumerating integral orbits in prehomogeneous vector spaces ordered by invariants.  These include zeta functions important in arithmetic statistics enumerating low rank rings \cite{WY92}, \cite{Y93}, Epstein zeta functions and functions enumerating representation numbers of rational and irrational quadratic forms, multiple Dirichlet series enumerating several invariants \cite{S82a} and Eisenstein series of Selberg and others \cite{S82b}, see, for instance \cite{B05}, \cite{B10}, \cite{BST13}, \cite{BG14}, \cite{BH16}, \cite{BS15a}, \cite{BS15b}, \cite{BV15} for applications of orbit counting to arithmetic statistics. The purpose of this article is to introduce a new method of proving subconvexity of prehomogeneous vector space zeta functions, illustrated by proving $t$-aspect subconvexity for the Shintani zeta function enumerating class numbers of binary cubic forms, thus solving a problem of Thorne \cite{T14b}.  Moreover, the method is general, and we plan to return to prove subconvexity estimates for a general class of prehomogeneous vector space zeta functions in $t$ and $q$ aspects and for some zeta functions with an automorphic twist \cite{S94}, \cite{L18}, \cite{H17}.

 Let $V_\zed = \{f(x,y) = ax^3 + bx^2 y + cxy^2 + dy^3: a,b,c,d \in \zed\}$ be the space of integral binary cubic forms. The group $\SL_2(\zed)$ acts by integral change of variable. Shintani introduced zeta functions \cite{S72}
 \begin{equation}
  \xi^{\pm}(s) := \sum_{\substack{f \in \SL_2(\zed)\backslash V_\zed\\ \pm \Disc(f) >0}} \frac{1}{|\Stab(f)|} \frac{1}{|\Disc(f)|^s}, \qquad \RE(s)>1.
 \end{equation}
In their study of the adelization of the zeta functions Datskovsky-Wright \cite{DW86} and Ohno \cite{O97} (see Thorne \cite{T10}) describe the diagonalization 
\begin{equation}
 \xi^{\add}(s)= 3^{\frac{1}{2}}\xi^+(s) + \xi^-(s), \qquad \xi^{\sub}(s) = 3^{\frac{1}{2}}\xi^+(s) - \xi^-(s)
\end{equation}
and completed zeta functions 
\begin{align}
 \Lambda^{\add}(s) &= \left(\frac{432}{\pi^4}\right)^{\frac{s}{2}}\Gamma \left(\frac{s}{2}\right) \Gamma \left(\frac{s+1}{2}\right) \Gamma\left(\frac{s+\frac{1}{6}}{2} \right) \Gamma\left(\frac{s-\frac{1}{6}}{2}\right)\xi^{\add}(s),\\\notag
 \Lambda^{\sub}(s)&= \left(\frac{432}{\pi^4}\right)^{\frac{s}{2}}\Gamma \left(\frac{s}{2}\right) \Gamma \left(\frac{s+1}{2}\right) \Gamma\left(\frac{s+\frac{5}{6}}{2} \right) \Gamma\left(\frac{s+\frac{7}{6}}{2}\right)\xi^{\sub}(s),
\end{align}
which satisfy the self-dual functional equations $\Lambda(s) = \Lambda(1-s)$.  These zeta functions are degree 4, with analytic conductor $\sC(\frac{1}{2}+i\tau) = \tau^4$ as $\tau \to \infty$.  The convexity bound states $\left|\xi\left(\frac{1}{2} + i\tau\right)\right| \ll_\epsilon \tau^{1+\epsilon}$.  While it is known that these functions do not satisfy the Riemann Hypothesis \cite{T10}, it may be conjectured that they still satisfy the Lindel\"{o}f Hypothesis $\left|\xi\left(\frac{1}{2} + i\tau\right)\right| \ll_\epsilon \tau^\epsilon$.  In this direction we prove the following subconvexity estimate.
\begin{theorem}\label{untwisted_theorem}
 The Shintani zeta functions satisfy the sub-convex bound, for any $\epsilon > 0$, 
 \begin{equation}
  \xi^{\add}\left(\frac{1}{2} + i\tau\right), \xi^{\sub}\left(\frac{1}{2} + i\tau\right) \ll_\epsilon \tau^{\frac{98}{99}+\epsilon}
 \end{equation}
as $\tau \to \infty$.  
\end{theorem}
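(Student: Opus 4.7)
The plan is to prove the bound via an oscillatory integral analysis of a Shintani-type integral representation, combined with a stationary phase estimate in four variables. By the functional equation of Thorne recalled above and the standard approximate functional equation, the estimate reduces to bounding smoothly weighted dyadic sums
\begin{equation*}
S(T) := \sum_{f \in \SL_2(\bZ)\backslash V_\bZ^{\mathrm{irr}}} \frac{1}{|\Stab(f)|}\,\frac{W(|\Disc(f)|/T)}{|\Disc(f)|^{\frac{1}{2}+i\tau}}
\end{equation*}
for $W$ smooth, compactly supported on $[1,2]$, and $T$ dyadic in the range $1 \ll T \ll \tau^{2+\epsilon}$. Since Davenport--Heilbronn gives $\#\{f : |\Disc(f)| \asymp T\} \asymp T$, the trivial bound is $S(T) \ll T^{1/2}$; summed over dyadic $T \leq \tau^{2+\epsilon}$ this recovers the convexity bound $\xi^{\pm}\bigl(\tfrac{1}{2}+i\tau\bigr) \ll \tau^{1+\epsilon}$, so I must save a fixed power of $\tau$ from cancellation in the oscillatory factor $|\Disc(f)|^{-i\tau}$.

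Next, I would realise $S(T)$ as an integral of a theta-like lattice sum over a Siegel fundamental domain $\sF \subset \SL_2(\bR)$ for $\SL_2(\bZ)$. Parametrising $g \in \sF$ by Iwasawa coordinates $g = n(u) a(y) k$ and picking a Schwartz test function $w_T$ on $V_\bR$ concentrated on $\{|\Disc| \asymp T\}$ at height scale $y$, the sum becomes, up to boundary contributions from cusps and reducible orbits,
\begin{equation*}
S(T) \;\approx\; \int_\sF \sum_{v \in V_\bZ^{\mathrm{irr}}} w_T(gv)\, |\Disc(v)|^{-i\tau}\, dg.
\end{equation*}
Poisson summation in $v \in V_\bZ \cong \bZ^4$ replaces the inner sum by a dual sum of Fourier integrals on $V_\bR$, whose phase combines the linear Fourier phase with the non-linear piece $\tau\log|\Disc(v)|$ coming from the prehomogeneous structure.

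The central analytic step is a stationary phase analysis in the four variables $(a,b,c,d)$. Because $\Disc$ is a quartic polynomial on $V_\bR$ with non-vanishing Hessian away from its singular locus, repeated integration by parts kills all dual frequencies $\check v \in V_\bZ$ outside a narrow window determined by $\nabla(\tau\log|\Disc|)$; the cardinality of this window, together with the stationary phase amplitude, is the source of the power saving. Combined with a dyadic decomposition of the $g$-integral in the Iwasawa height $y$, this yields the quantitative estimate.

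The main obstacle is the interaction between the cusp region $y \to \infty$ of $\sF$, where the theta-like sum has only polynomial decay and is dominated by forms near the reducible locus, and the singular region $\{|\Disc| \ll 1\}$, where the phase $\log|\Disc|$ blows up and the stationary phase is degenerate. Achieving uniform control requires a careful two-parameter dyadic decomposition in the height $y$ and in the distance to $\{\Disc = 0\}$, applying a stationary phase argument in the bulk and trivial bounds together with the automorphic properties of $\sF$ in the degenerate regimes. Optimising the cutoff parameters against the oscillatory saving balances these three scales and produces the exponent $\tfrac{98}{99}$, with the fraction $\tfrac{1}{99}$ being the output of this optimisation.
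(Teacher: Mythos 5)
Your proposal shares the opening moves with the paper (approximate functional equation, dyadic sums $S(T)$, realising the orbit sum as an integral of a lattice sum over $\sF$ — this last is exactly Bhargava's averaging trick), but the central analytic step you propose is different from the paper's, and your sketch leaves it unexecuted in a way that hides genuine obstructions.

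You propose to apply Poisson summation in $v \in V_\bZ$ and then run a four-variable stationary phase on the combined phase ``linear Fourier phase $+\,\tau\log|\Disc|$''. The paper does \emph{not} Poisson sum in the untwisted case. Instead, it (i) truncates the Siegel height $t \leq T_1$ (and handles the $a=0$ forms separately), using the Bhargava/Davenport--Heilbronn counting (Lemma \ref{large_T_lemma}) to control the cusp contribution; (ii) replaces each form $f$ by a local average over a ball $B_R(f)$ of radius $R$ chosen so that the weight $V_{\frac12+i\tau}$, the partition of unity, and $W(f)$ vary negligibly on $B_R(f)$ (Lemmas \ref{f_variation_lemma}, \ref{t_variation_lemma}, \ref{weight_function_lemma}); and (iii) bounds the resulting average $\E_{y\in B_R}|\Disc(f+y)|^{-i\tau}$ by Taylor-expanding $\log|\Disc|$ to degree four and applying van der Corput's inequality twice to a one-dimensional slice, using the lower bound on a third partial derivative from Lemma \ref{derivative_lemma}. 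This is a fundamentally one-dimensional iterated-difference argument after the Taylor expansion, not a multidimensional stationary phase, and the precise exponent $\frac{1}{27}$ of the saving (and hence $\frac{98}{99}$ after optimising $T_1$ against the cusp error) drops out of the double van der Corput recursion, not from counting stationary points of a dual frequency sum.

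There are two concrete gaps. First, Poisson summation replaces $\sum_v w_T(gv)|\Disc(v)|^{-i\tau}$ with a dual sum whose effective length is governed by the $C^\infty$ norms of the summand; because of the factor $|\Disc(v)|^{-i\tau}$ these norms grow polynomially in $\tau$, and near the cusp of $\sF$ the derivative bounds of Lemma \ref{derivative_lemma} deteriorate like powers of $t$. So you cannot simply ``integrate by parts to kill all dual frequencies outside a narrow window'': quantifying that window, and showing its cardinality times the stationary-phase amplitude beats the trivial bound uniformly over the Siegel set, \emph{is} the theorem, and you have not done it. Second, you assert that ``optimising the cutoff parameters ... produces the exponent $\tfrac{98}{99}$'' without any calculation; but the exponent $\tfrac{98}{99}$ in the paper arises from a specific balance $T_1 = \min(Y^{1/24}, \tau^{1/99})$ between the cusp error $Y^{1/2}/T_1$ from Lemma \ref{large_T_lemma} and the exponential-sum error $T_1^{8/3} Y^{1/2} \tau^{-1/27}$ from Lemma \ref{exponential_sum_lemma}, and there is no reason to expect your alternative method to land on the same numerology. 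As written the proposal is a plausible programme, not a proof: you would need to prove a quantitative stationary-phase lemma with explicit Siegel-set uniformity and an explicit count of contributing dual frequencies to close either gap.
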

Let $\phi$ be a Hecke-eigen cusp form for $\SL_2(\zed)$.  The first author \cite{H17} introduced the automorphic twisted zeta functions 
\begin{equation}
 \sL^{\pm}(s,\phi) :=\sum_{\substack{f \in \SL_2(\zed)\backslash V_\zed\\ \pm \Disc(f) >0}} \frac{\phi(f)}{|\Stab(f)|} \frac{1}{|\Disc(f)|^s}.
\end{equation}
Our method is capable of proving the $t$-aspect subconvexity of these functions without significant modification, but we confine ourselves to the untwisted case in this paper for ease of presentation, see our preprint \cite{HL22} for a proof in the twisted case.

\subsection{Discussion of method}

As usual in the theory of zeta functions, the proof of Theorem \ref{untwisted_theorem} begins by expressing the zeta function in the critical strip via an approximate functional equation which expresses the zeta function as the sum of two Dirichlet polynomials, each of length square root of the conductor.  The theory of such functional equations for prehomogeneous vector space zeta functions was first developed by Sato and Shintani \cite{SS74}, with extensions to larger classes of zeta functions in works of Yukie \cite{Y93} and Saito \cite{S03}.  The coefficients in these Dirichlet polynomials are expressed as a sum over binary cubic forms restricted to a fundamental domain for the action of $\SL_2(\zed)\backslash \SL_2(\bR)$.  Bhargava's averaging trick \cite{B05} is used to average over many fundamental domains in order to bound the contribution of the lattice points in the cusp.  Having trimmed away the cusps, the points now lie essentially within a compact neighborhood of the identity; van der Corput's inequality is used to obtain cancellation in exponential sums in which the phase has controlled partial derivatives.

\subsection{Background and related work} The zeta function of a prehomogeneous vector space was introduced by Sato and Shintani \cite{SS74} in the 70s, with the case of binary cubic forms studied in detail by Shintani \cite{S72} who determined poles and residues along with the meromorphic continuation.  The class of functions have been extended over time.  F. Sato introduced series in multiple complex variables \cite{S82a}, \cite{S82b}, treating Eisenstein series associated to irrational indefinite quadratic forms.  Later he introduced a twisting automorphic form and showed a wide class of familiar number theoretic objects can be constructed in this way \cite{S94} including Langlands standard L-functions, and Dirichlet series of Maass constructed from systems of quadratic forms.  Datskovsky, Wright and Yukie developed the adelization with applications to low degree number fields \cite{DW86}, \cite{WY92}, \cite{Y93}.  Saito completed part of Yukie's work \cite{S03} by establishing criteria which guarantee the convergence and meromorphic continuation of the zeta functions.  Taniguchi and Thorne developed the local theory of the binary cubic form case more carefully \cite{TT13}, using this to prove a secondary main term in the Davenport-Heilbronn Theorem counting cubic number fields ordered by discriminant. Recently Wen-Wei Li has further developed the automorphic twisted zeta functions \cite{L18} proving local functional equations and giving an indication of the size of the class of objects considered.

The theory of subconvexity of zeta and L-functions is a very active current area of number theory.  After pioneering work of Duke, Friedlander and Iwaniec \cite{DFI93} subconvexity has frequently been established by the amplification method, which calculates a moment of the zeta or L-function in a family multiplied by a Dirichlet polynomial that selects for the function of interest.  Due to the high degree of the prehomogeneous vector space zeta functions, calculating moments is not quickly available and so the proof here uses a truncation method, together with an approach similar to the classical approach to the Riemann zeta function using van der Corput's method.  An interesting parallel result has been proved by Blomer recently for Epstein zeta functions \cite{B20}, which are prehomogeneous zeta functions.  In principle, similar bounds to Blomer's can be obtained from methods for PVS zeta functions, we intend to return to this issue in a forthcoming publication.

\section*{Notation}
We use the shorthand $e(x) = e^{2\pi ix}$, $c(x) = \cos(2\pi x)$, $s(x) = \sin(2\pi x)$. On $\bR/\zed$ we use the distance 
\begin{equation}
 \|x\|_{\bR/\zed} = \min_{n \in \zed} |x-n|.
\end{equation}
In $V_{\bR} = \{f(x,y)=ax^3 + bx^2y + cxy^2 + dy^3: a,b,c,d \in \bR\}$ define the infinity ball at $f$ of radius $R$ to be 
\begin{align}
B_R(f) = \{ &a'x^3 + b' x^2y + c' xy^2 + d'y^3: a',b',c',d' \in \bR,\\\notag& \max (|a-a'|,|b-b'|,|c-c'|,|d-d'|) \leq R\}.
\end{align}
 The averaging operator $\E_{s \in S} f(s)$ indicates $\frac{1}{|S|} \sum_{s \in S} f(s )$.  We use the following asymptotic notation.  For positive quantities $A, B$ which may depend on the parameter $\tau$, $A = O(B)$ means there is a constant $C>0$ such that $A \leq CB$. This has the same meaning as $A \ll B$.  We write $A \asymp B$ if $A \ll B \ll A$ and $A = o(B)$ if $\lim_{\tau \to \infty} \frac{A}{B} = 0.$ For a function $f$ on $\bR^+$, the Mellin transform is $\tilde{f}(s) = \int_0^\infty f(x)x^{s-1} dx$.  For differential operators with multi-indices $\alpha$, $D_\alpha = \partial_{x_1}^{\alpha_1}...\partial_{x_k}^{\alpha_k}$, $|\alpha| = \alpha_1 + ... + \alpha_k$.  We use the $C^j$ norms on $\bR^n$,
\begin{equation}
 \|f\|_{C^j}  = \sum_{|\alpha|\leq j}   \sup_{x \in \bR^n}\|D^\alpha f(x)\|.
\end{equation}

Our arguments use a smooth partition of unity on the positive reals.  Let $\sigma \geq 0$ be smooth and supported in $\left[\frac{1}{2}, 2\right]$ and satisfy $\sum_{n \in \zed} \sigma(2^n x) \equiv 1$ for $x \in \bR^+$.  

\section{Background}
As in the usual treatment of the subconvexity problem, we start  from an approximate functional equation. 

\subsection{The approximate functional equation}
Iwaniec and Kowalski \cite{IK04} Chapter 5 outlines a general framework for representing zeta functions with a functional equation inside the critical strip, called the approximate functional equation.  This framework applies to the Shintani zeta functions with a slight modification due to the pole at $\frac{5}{6}$ of $\xi^{\add}$.  Inside the domain of absolute convergence, write 
\begin{equation}
 \xi^{\add}(s) = \sum_n \frac{a^{\add}(n)}{n^s}, \qquad \xi^{\sub}(s) = \sum_n \frac{a^{\sub}(n)}{n^s}.
\end{equation}

The general framework takes the following data,
\begin{itemize}
 \item Gamma factor $\gamma(s) = \pi^{-\frac{ds}{2}} \prod_{j=1}^d \Gamma\left(\frac{s+\kappa_j}{2}\right), \qquad \kappa_j \geq -1$.
 \item Conductor $q$.
 \item Sign of functional equation $\epsilon$.
 \item Analytic conductor $q(s) = q \prod_{j=1}^d (|s+\kappa_j| + 3)$.
 \item Completed zeta function $\Lambda(s) = q^{\frac{s}{2}}\gamma(s) \zeta(s)$.  
 \item Functional equation $\Lambda(s) = \epsilon \Lambda(1-s)$.
\end{itemize}
The zeta functions $\xi^{\add}$ and $\xi^{\sub}$ fit into these frameworks with $d = 4$, $\epsilon = 1$, conductor $q = 432$ and gamma factors 
\begin{align}
 \gamma^{\add}(s) &= \pi^{-2s}\Gamma\left(\frac{s}{2} \right)\Gamma\left(\frac{s+1}{2} \right) \Gamma\left( \frac{s + \frac{1}{6}}{2}\right) \Gamma\left(\frac{s -\frac{1}{6}}{2} \right)  \\\notag
 \gamma^{\sub}(s) &= \pi^{-2s}\Gamma\left(\frac{s}{2} \right)\Gamma\left(\frac{s+1}{2} \right) \Gamma\left(\frac{s + \frac{5}{6}}{2} \right) \Gamma\left(\frac{s + \frac{7}{6}}{2} \right).
\end{align}

The proof of \cite{IK04} Theorem 5.3 yields the following representation.
\begin{theorem}\label{afe_theorem}
 Let $G(u)$ be any function which is holomorphic and bounded in $|\RE(u)| < 4$, even, with $G(0)=1$. For $0 < \RE(s) < 1$,
 \begin{align}
  \xi^{\add}(s) &= \sum_n \frac{a^{\add}(n)}{n^s}V_{s}^{\add}\left(\frac{n}{\sqrt{432}} \right) + \epsilon^{\add}(s)\sum_n \frac{a^{\add}(n)}{n^{1-s}}V_{ 1-s}^{\add}\left(\frac{n}{\sqrt{432}}\right) + R^{\add}(s)\\ \notag
  \xi^{\sub}(s) &= \sum_n \frac{a^{\sub}(n)}{n^s}V_{s}^{\sub}\left(\frac{n}{\sqrt{432}} \right) + \epsilon^{\sub}(s)\sum_n \frac{a^{\sub}(n)}{n^{1-s}}V_{ 1-s}^{\sub}\left(\frac{n}{\sqrt{432}}\right) + R^{\sub}(s)
 \end{align}
where
$\epsilon^*(s) = 432^{\frac{1}{2}-s} \frac{\gamma^*(1-s)}{\gamma^*(s)}$, 
\begin{equation}
 V^*_s(y) = \frac{1}{2\pi i} \int_{\RE u = 3} y^{-u}G(u)\frac{\gamma^*(s+u)}{\gamma^*(s)} \frac{du}{u}
\end{equation}
and 
\begin{align}
 R^{\add}(s) &= \left(\Res_{u=1-s}+\Res_{u = \frac{5}{6}-s} + \Res_{u = \frac{1}{6}-s} + \Res_{u = -s} \right)\frac{\Lambda^{\add}(s+u)}{432^{\frac{s}{2}}\gamma^{\add}(s)}\frac{G(u)}{u}\\ \notag
 R^{\sub}(s) &= \left(\Res_{u=1-s}+ \Res_{u = -s} \right)\frac{\Lambda^{\sub}(s+u)}{432^{\frac{s}{2}}\gamma^{\sub}(s)}\frac{G(u)}{u}.
\end{align}

\end{theorem}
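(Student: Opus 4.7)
The plan is to follow the contour-integral template of \cite{IK04} Chapter 5, carefully tracking the extra residues introduced by the pole of $\xi^{\add}$ at $s=5/6$. I would begin with the contour integral
\begin{equation*}
I^*(s) = \frac{1}{2\pi i}\int_{\RE u = 3} \frac{\Lambda^*(s+u)}{432^{s/2}\gamma^*(s)}\,432^{u/2}\,G(u)\,\frac{du}{u}, \qquad * \in \{\add,\sub\}.
\end{equation*}
On $\RE u = 3$ one has $\RE(s+u)>1$, so inserting the Dirichlet series $\xi^*(s+u) = \sum_n a^*(n)\,n^{-(s+u)}$ and swapping sum and integral (justified by the Stirling decay of $\gamma^*(s+u)/\gamma^*(s)$ on vertical lines together with the boundedness of $G$ on $|\RE u|<4$) identifies $I^*(s)$ with the first Dirichlet polynomial in the claim, the factors $432^{u/2}$ and $n^{-u}$ combining with the definition of $V_s^*$ to produce the argument $n/\sqrt{432}$.

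Second, I would shift the contour from $\RE u = 3$ to $\RE u = -3$. The integrand has a simple pole at $u = 0$ from the $1/u$ factor with residue $\Lambda^*(s)/(432^{s/2}\gamma^*(s)) = \xi^*(s)$, which produces the left-hand side of the identity. The remaining poles come from $\Lambda^*(s+u)$: the completed zeta $\Lambda^{\add}$ inherits simple poles at $s=1$ and $s=5/6$ from $\xi^{\add}$, and by the self-dual functional equation $\Lambda^{\add}(s) = \Lambda^{\add}(1-s)$ also at $s=0$ and $s=1/6$, producing the four residues in $R^{\add}(s)$; $\Lambda^{\sub}$ contributes only the two reflected poles at $s=0,1$, producing the two residues in $R^{\sub}(s)$. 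After this step one has $I^*(s) = I_{-3}^*(s) + \xi^*(s) - R^*(s)$, where $I_{-3}^*(s)$ denotes the integral on the shifted contour.

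Third, on the shifted line I would apply the functional equation $\Lambda^*(s+u) = \Lambda^*(1-s-u)$ and substitute $v = -u$, using that $G$ is even, to rewrite $I_{-3}^*(s)$ as an integral on $\RE v = 3$ of $\Lambda^*(1-s+v)$ against the analogous gamma and conductor factors. Expanding the Dirichlet series $\xi^*(1-s+v)$ and collecting prefactors collapses the conductor and gamma ratios into $\epsilon^*(s) = 432^{1/2-s}\gamma^*(1-s)/\gamma^*(s)$, while the remaining $v$-integral identifies with $V_{1-s}^*$ evaluated at $n/\sqrt{432}$, producing the second Dirichlet polynomial. Combining the three identities yields the theorem. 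The only delicate step is correctly isolating the four poles of $\Lambda^{\add}(s+u)$ inside the strip; once the polar divisor is fixed via the functional equation, the remainder is the standard Mellin-shift bookkeeping of \cite{IK04}.
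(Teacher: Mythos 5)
Your proof is correct and follows the same route the paper takes: the paper simply invokes the Mellin-shift proof of \cite{IK04} Theorem 5.3, and you reconstruct that contour argument, correctly identifying the key adaptation that $\Lambda^{\add}(s+u)$ contributes four residues (at $u=1-s$, $\frac{5}{6}-s$, $\frac{1}{6}-s$, $-s$, coming from the pole of $\xi^{\add}$ at $\frac{5}{6}$ and the reflections under $\Lambda(s)=\Lambda(1-s)$) while $\Lambda^{\sub}$ contributes only two. One sign to double-check in your write-up: the residue theorem gives $I^*(s)-I^*_{-3}(s)=\xi^*(s)+R^*(s)$ rather than $I^*(s)=I^*_{-3}(s)+\xi^*(s)-R^*(s)$ when $R^*$ denotes the sum of residues as in the statement, so the signs should be traced through once more — though this is immaterial downstream since the paper notes $R^*(\frac{1}{2}+i\tau)=o(1)$ and discards it.
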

For the choice of test function $G(u) = \left(\cos \frac{\pi u}{4A} \right)^{-4dA}$, \cite{IK04} Lemma 5.4 states the following.
\begin{lemma}\label{V_lemma}
 Suppose $\RE(s + \kappa_j) \geq 3\alpha >0$ for $1 \leq j \leq d$. Then the derivatives $V_s(y)$ satisfy 
 \begin{align}
  y^a V_s^{(a)}(y) &\ll \left(1 + \frac{y}{\tau^2}\right)^{-A}, \qquad 
  y^a V_s^{(a)}(y) = \delta_a + O\left(\left(\frac{y}{\tau^2}\right)^\alpha \right)
 \end{align}
where $\delta_0 = 1$, $\delta_a = 0$ if $a>0$, and the implied constants depend only on $\alpha, a, A$ and $d$.
\end{lemma}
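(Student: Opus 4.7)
The plan is to treat both claims as contour shifts of the defining inverse Mellin representation of $V_s$. Differentiating $a$ times under the integral sign gives
$$y^a V_s^{(a)}(y) = \frac{1}{2\pi i}\int_{\RE u = 3} y^{-u} P_a(u) G(u)\, 432^{u/2}\, \frac{\gamma^*(s+u)}{\gamma^*(s)}\, \frac{du}{u},$$
where $P_a(u) = (-1)^a u(u+1)\cdots(u+a-1)$ has $P_a(0) = 0$ for $a \geq 1$. This cancels the $1/u$ pole for $a \geq 1$; for $a = 0$ the residue at $u = 0$ equals $G(0) = 1$. This is the source of $\delta_a$ in the asymptotic statement.

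The key input is a uniform Stirling estimate. For $s = \sigma + i\tau$ with large $\tau$ and $u$ in a bounded vertical strip on which every $\Gamma((s+u+\kappa_j)/2)$ is regular, one has
$$\left|\frac{\gamma^*(s+u)}{\gamma^*(s)}\right| \ll \tau^{d\RE(u)/2} \exp\!\left(\tfrac{\pi d}{4}|\IM u|\right),$$
while $|G(u)| \ll e^{-\pi d|\IM u|}$ by direct inspection of $(\cos(\pi u/(4A)))^{-4dA}$. Their product has net exponential decay of rate $\tfrac{3\pi d}{4}|\IM u|$ along any vertical line in the strip, making all the contour shifts below absolutely convergent with implied constants depending only on $d$, $A$ and the horizontal shift.

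I would first establish the asymptotic $y^a V_s^{(a)}(y) = \delta_a + O((y/\tau^2)^\alpha)$ by shifting the contour from $\RE u = 3$ leftward to $\RE u = -\alpha$. The hypothesis $\RE(s+\kappa_j) \geq 3\alpha$ keeps every $\Gamma((s+u+\kappa_j)/2)$ regular throughout the strip $-\alpha \leq \RE u \leq 3$, so the only pole crossed is the one at $u = 0$, contributing $\delta_a$. The remaining integral along $\RE u = -\alpha$ is bounded by $y^\alpha \tau^{-d\alpha/2} = (y/\tau^2)^\alpha$ after invoking $d = 4$. For the rapid decay bound $y^a V_s^{(a)}(y) \ll (1 + y/\tau^2)^{-A}$, the range $y \leq \tau^2$ is immediate from the asymptotic since $|\delta_a| \leq 1$ and $(y/\tau^2)^\alpha \leq 1$; for $y \geq \tau^2$, I shift the original contour rightward to $\RE u = A$, where no pole is crossed and the Stirling bound yields $y^{-A}\tau^{dA/2} = (y/\tau^2)^{-A}$.

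The principal technical issue is the verification of the uniform Stirling estimate along the shifted contours, together with confirming that the exponential growth in $|\IM u|$ from the gamma ratio is strictly dominated by the cosine-power decay of $G$, uniformly in $s$ on the critical strip. This is essentially the content of Iwaniec--Kowalski \cite{IK04} Lemma 5.4, which I would follow closely; the only additional point of care is the negative shift $\kappa = -\tfrac{1}{6}$ appearing in $\gamma^{\add}$, which is safely accommodated by the standing hypothesis $\RE(s+\kappa_j) \geq 3\alpha > 0$.
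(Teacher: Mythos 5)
Your proof is correct and reproduces, in essentially the same form, the argument of Iwaniec--Kowalski Lemma~5.4, which is exactly the reference the paper invokes for this lemma without supplying a proof. The contour-shift structure, the role of $P_a(u)=(-1)^a u(u+1)\cdots(u+a-1)$ vanishing at $u=0$ for $a\geq 1$, the use of $\RE(s+\kappa_j)\geq 3\alpha$ to keep the shifted strip free of Gamma poles, and the balance between the Stirling growth rate $\exp(\pi d|\IM u|/4)$ of the gamma ratio against the decay rate $\exp(-\pi d|\IM u|)$ of $G(u)=(\cos(\pi u/4A))^{-4dA}$ are all as in the cited source.
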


The Lemma can be applied for either $V^{\add}_s$ and $V^{\sub}_s$ with $\RE(s) = \frac{1}{2}$ and $\alpha = \frac{1}{9}$.  Note that, with this choice of test function, the residue terms are $o(1)$ as $t \to \infty$ by Stirling's approximation, so may be ignored. Also, $|\epsilon^*(s)|=1$.

By forming linear combinations, to prove Theorem \ref{untwisted_theorem} it will suffice to prove a pair of estimates.
\begin{proposition}\label{afe_proposition}
 We have the pair of estimates
 \begin{align}
 \sum_{f \in \Gamma \backslash V_+} \frac{1}{|\Stab(f)|} \frac{1}{\Disc(f)^{\frac{1}{2}+i\tau}} V_{\frac{1}{2} + i\tau}\left(\frac{\Disc(f)}{\sqrt{432}}  \right) &\ll_\epsilon \tau^{\frac{98}{99}+\epsilon}\\ \notag
 \sum_{f \in \Gamma \backslash V_-}  \frac{1}{|\Disc(f)|^{\frac{1}{2}+i\tau}} V_{\frac{1}{2} + i\tau}\left(\frac{|\Disc(f)|}{\sqrt{432}}  \right) &\ll_\epsilon \tau^{\frac{98}{99}+\epsilon}.
\end{align}
\end{proposition}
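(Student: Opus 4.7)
The plan begins by applying Lemma \ref{V_lemma}, which ensures that $V_{\frac{1}{2}+i\tau}(y)$ is essentially $1$ for $y \leq \tau^{2}$ and rapidly decaying beyond, combined with a smooth dyadic decomposition using the partition $\sigma$. Up to an error $O_A(\tau^{-A})$, each sum in the proposition reduces to bounding, for every dyadic scale $X \ll \tau^{2+\epsilon}$, the weighted sum
\begin{equation*}
S_X := \sum_{f \in \Gamma \backslash V_{\pm}} \frac{\sigma(|\Disc(f)|/X)}{|\Stab(f)|\,|\Disc(f)|^{\frac{1}{2}+i\tau}}
\end{equation*}
by $\tau^{\frac{98}{99}+\epsilon}$ uniformly in $X$. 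The cutoff $X \asymp \tau^{2+\epsilon}$ matches the Dirichlet-polynomial length $\sqrt{\sC}$ from the approximate functional equation.

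Next I realize $S_X$ geometrically via Bhargava's averaging trick. Choosing a fundamental domain for $\SL_2(\zed)\backslash \SL_2(\bR)$ in Iwasawa coordinates $g = n_u a_t k_\theta$ and averaging over a small compact neighbourhood of the identity in $\SL_2(\bR)$, the sum $S_X$ is converted into an integral over $g \in \SL_2(\zed)\backslash \SL_2(\bR)$ of an inner sum over $v = (a,b,c,d) \in V_{\zed}$ weighted by $\Phi(g\cdot v)\,\sigma(|\Disc(v)|/X)\,|\Disc(v)|^{-\frac{1}{2}-i\tau}$, for a fixed smooth compactly supported test function $\Phi$ on $V_{\bR}$. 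I then split the integration domain into the Siegel bulk $\{t \leq T_0\}$ and the cusp $\{t > T_0\}$ for a parameter $T_0$ to be optimised. In the cusp, the integer points with $g \cdot v$ in the support of $\Phi$ concentrate on forms with small leading coefficient $a$, and the averaging in $g$ smooths the count; Bhargava-type upper bounds for integer binary cubic forms in the cusp then control this contribution at the expense of a controlled power of $T_0$.

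On the bulk the points $g \cdot v$ lie in a fixed compact subset of $V_{\bR}$, and after change of variables the inner sum becomes a four-variable exponential sum
\begin{equation*}
\sum_{(a,b,c,d)\in \zed^4} \Psi(a,b,c,d;g)\, e\!\left(-\frac{\tau}{2\pi}\log|\Disc(ax^3+bx^2y+cxy^2+dy^3)|\right),
\end{equation*}
with smooth, compactly supported amplitude $\Psi$ and explicit phase $-\tau \log|\Disc|$ up to constants. To exploit cancellation I would apply van der Corput's method in one or several of the integer variables, using quantitative lower bounds on higher mixed partial derivatives of $\log|\Disc|$ which are valid on the bulk (i.e.\ away from the singular locus $\{\Disc=0\}$). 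The saving is then combined with the cusp bound and the parameter $T_0$ is optimised to produce the target exponent.

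The main obstacle is that the partial derivatives of the phase degenerate as $t$ approaches the cusp threshold $T_0$ and the form approaches a reducible one: the effective van der Corput saving weakens as $T_0$ grows, while the cusp contribution improves as $T_0$ grows. The precise exponent $\frac{98}{99}$ arises from the exact balancing of these two competing losses, together with a careful choice of the van der Corput process (direction of differencing, order of derivative test) adapted to the anisotropy of the phase $\log|\Disc|$ on $V_{\bR}$.
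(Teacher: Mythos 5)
Your framework for the main term is essentially the paper's: apply Lemma \ref{V_lemma} and a dyadic partition, realize the sum via Bhargava's averaging, truncate the cusp by a parameter, average over a small lattice ball, and apply van der Corput using lower bounds on the third partials of $\log|\Disc|$. However, there is a genuine gap in how you treat the cusp.

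You claim that the cusp contribution is controlled by Bhargava-type counting bounds ``at the expense of a controlled power of $T_0$.'' This is correct only for the forms with $a \neq 0$: by Lemma \ref{large_T_lemma}, forms with $a \neq 0$ and $t > T_1$ contribute $O(\sqrt{Y}/T_1 + Y^{1/3})$ after division by $\sqrt{Y}$, which is acceptable. But the forms with $a = 0$ (the reducible orbit) are not negligible: they number $O(Y)$ classes with $|\Disc| \asymp Y$ (taking $T$ up to its maximal value $\asymp Y^{1/4}$ in Lemma \ref{large_T_lemma} gives $O(Y^{3/4}\cdot Y^{1/4}) = O(Y)$), so after dividing by $\sqrt{Y}$ and summing over dyadic $Y \ll \tau^{2+\epsilon}$ they contribute $\asymp \tau$, the full convexity bound. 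No counting argument alone can save a power of $\tau$ here. Moreover, you cannot fold them into your bulk van der Corput argument: Lemma \ref{derivative_lemma}'s lower bound on the third partials of $\log|\Disc|$ degrades like $t^{-9}$, and the $a = 0$ forms have $t$ running up to $\asymp Y^{1/4}$, so the multivariable phase becomes flat precisely where these forms live.

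The paper addresses this by introducing a second truncation parameter $T_2$, keeping the forms with $a = 0$ and $t \geq T_2$ in a separate sum $\Sigma_2$, and running a distinct one-variable van der Corput argument in the $d$-coordinate (Lemma \ref{exponential_sum_reducibles_lemma}), exploiting that $\partial_d \log|\Disc|$ is small but nondegenerate. Optimizing $T_2 = Y^{1/4}/\tau^{1/9}$ gives $\Sigma_2 \ll \tau^{8/9}$, which is subsumed by the $\tau^{98/99}$ from the $a \neq 0$ piece. Without a separate analysis of this type, your proposal stalls at the convexity bound, even if the $a \neq 0$ analysis is carried out correctly. You should add a second parameter and a cancellation argument tailored to the degenerate sublattice $\{a = 0\}$.
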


\subsection{Background regarding $\GL_2(\bR)$}

We use the conventions of \cite{BST13} regarding Lie groups and Bhargava's averaging trick.  Let $\Gamma = \SL_2(\zed)$, and
\begin{align}
G^+ &= \{g \in \GL_2(\bR): \det(g)>0\}\\ \notag K &= \SO_2(\bR), \qquad k_\theta = \begin{pmatrix} c(\theta) & s(\theta)\\ -s(\theta) & c(\theta) \end{pmatrix}\\ \notag
 A_+ &= \left\{ a_t: t \in \bR_+\right\}, \qquad a_t = \begin{pmatrix} \frac{1}{t} &0\\ 0 & t \end{pmatrix}\\ \notag
 N &= \left\{n_u: u \in \bR\right\}, \qquad n_u = \begin{pmatrix} 1 &0\\u&1 \end{pmatrix}\\ \notag
 \Lambda &= \left\{d_\lambda: \lambda \in \bR_+\right\}, \qquad d_\lambda = \begin{pmatrix} \lambda &0\\0&\lambda\end{pmatrix}.
\end{align}

The Iwasawa decomposition of $\SL_2(\bR)$ expresses $g = n_u a_t k_\theta$.    Then for $g \in G^+$,$g = n_u a_t k_\theta d_\lambda$ and Haar measure on $G^+$ is given by $dg = du \frac{dt}{t^3} d\theta \frac{d\lambda}{\lambda}$.  Let $\sF$ denote the standard fundamental domain for $\SL_2(\zed)\backslash \SL_2(\bR)$,
\begin{align}
 \sF &= \{n_ua_tk_\theta: n_u \in N'(a), a_t\in A', k_\theta \in K\}\\ \notag
 A' &= \left\{ \begin{pmatrix} \frac{1}{t} &0\\0 & t \end{pmatrix}: t \geq \frac{3^{\frac{1}{4}}}{\sqrt{2}}\right\}\\ \notag
 N'(a) &= \left\{ \begin{pmatrix} 1 &0\\u & 1\end{pmatrix}: u\in \nu(a)\right\}
\end{align}
where $\nu(a)$ is the union of two subintervals of $\left[-\frac{1}{2}, \frac{1}{2}\right]$ and is the whole interval if $a \geq 1$.  For constants $A, B>0$, the \emph{Siegel set} $\sS(A,B)$ is
\begin{equation}
 \sS(A,B) = \left\{n_ua_tk_\theta: |u| \leq A, t \geq B, k_\theta \in K\right\}.
\end{equation}

We assume that $F$ is a smooth function, right $K$ invariant, supported on a Siegel set $\sS(A,B)$ with bounded derivatives and such that $\sum_{\gamma \in \Gamma} F(\gamma g) = 1$.  This may be constructed by letting $F_0(n_u a_t k_\theta) = f(u)g(t)$ where $f, g \geq 0$ are smooth functions, $f$ is supported in $|u| \leq A$, $g$ in $t \geq B$ with $g \equiv 1$ for $g \geq B+1$ and $f(u)g(t) > 0$ on a fundamental domain $\sF$.  Then $F(g) = \frac{F_0(g)}{\sum_{\gamma \in \Gamma} F_0(\gamma g)}$.  Notice that for all $t$ sufficiently large, only  $\gamma\in N$ satisfy $F_0(\gamma g) \neq 0$, since the Siegel set meets only finitely many fundamental domains, and those that are not unipotent translates are covered by $g$ for which $t$ is bounded.  This implies that the derivative in $t$ of $F$ vanishes in the cusp.   Extend $F$ to $G^+$ by $F(d_\lambda g) = F(g)$ for all $\lambda \in \bR^+$.

\subsection{The prehomogeneous vector space of binary cubic forms}
The space of real binary cubic forms is 
\begin{equation}
 V_{\bR} = \{f(x,y) = ax^3 + bx^2y + cxy^2 + dy^3: a,b,c,d \in \bR\}
\end{equation}
with integral forms $V_\zed$ having $a,b,c,d \in \zed$.  The discriminant is 
\begin{equation}
 \Disc(f) = b^2c^2 - 4ac^3 -4b^3d-27a^2d^2 + 18abcd.
\end{equation}
There is a bilinear pairing on $V_{\bR}$ which identifies it with its dual space,
\begin{equation}
 \langle f, g\rangle = f_1 g_4 - \frac{1}{3} f_2 g_3 + \frac{1}{3} f_3g_2 -f_4g_1.
\end{equation}
The Fourier transform on $V_\bR$ is given by $\hat{F}(\xi) = \int_{V_{\bR}} F(x) e^{-2\pi i \langle x, \xi\rangle} dx$.

The space has a left $\GL_2(\bR)$ action,
\begin{equation}
 \gamma \cdot f(x,y) = \frac{f((x,y)\gamma)}{|\det \gamma|},
\end{equation}
and $\Disc(\gamma \cdot f) = \det(\gamma)^2 \Disc(f)$.
Under this action there are two open orbits $V_{\pm} = \{f: \pm \Disc(f) > 0\}$ and a singular set $S = \{f: \Disc(f) = 0\}$.
The spaces have base points $f_{\pm}$,
\begin{equation}
 f_+ = \frac{1}{(108)^{\frac{1}{4}}} \left(0, 3, 0, -1\right), \qquad f_- = \frac{1}{\sqrt{2}}\left(0,1,0,1\right).
\end{equation}
Both $V_{\pm}$ can be identified as homogeneous spaces for $G^+$.  The mappings 
\begin{align}
 V_+ &= \left\{n_u a_t k_\theta d_\lambda \cdot f_+: u \in \bR, t\in \bR^+, \theta \in \left[0, \frac{1}{3}\right), \lambda \in \bR^+\right\},\\ \notag
 V_- &= \left\{n_u a_t k_\theta d_\lambda \cdot f_-: u \in \bR, t\in \bR^+, \theta \in \left[0, 1\right), \lambda \in \bR^+\right\}
\end{align}
are bijections between $V_{\pm}$ and subsets of $G^+$.  The stabilizer of $f_-$ is trivial and the stabilizer of $f_+$ is the rotation group generated by rotation by $\frac{2\pi}{3}$. 
The bilinear pairing satisfies $\langle x, y \rangle = \langle g\cdot x, g^\iota \cdot y\rangle$.

The rotation $k_\theta$ maps
\begin{align}
 k_\theta \cdot f_- &= \frac{1}{\sqrt{2}}\left(s(\theta), c(\theta), s(\theta), c(\theta) \right)\\ \notag
 k_\theta \cdot f_+ &= \frac{1}{(108)^{\frac{1}{4}}}\left(s(3\theta), 3c(3\theta), -3 s(3\theta),  -c(3\theta)\right).
 \end{align}
Meanwhile $a_t \cdot (a,b,c,d) = \left(\frac{a}{t^3}, \frac{b}{t}, tc, t^3d\right)$ and $n_u\cdot  (a,b,c,d) = (a,  3au+b,  3au^2+2bu + c, au^3 + bu^2 + cu + d)$, $d_\lambda \cdot (a,b,c,d) = (\lambda a, \lambda b, \lambda c, \lambda d)$. 

Putting these formulas together gives the change of coordinates to homogeneous coordinates.
\begin{align}
 n_u a_t k_\theta d_\lambda \cdot f_- &= \frac{\lambda}{\sqrt{2}} \Bigl(t^{-3} s(\theta), 3t^{-3}s(\theta)u + t^{-1}c(\theta), 3t^{-3}s(\theta)u^2 + 2t^{-1}c(\theta)u+ts(\theta), \\&\notag \qquad t^{-3}s(\theta)u^3 + t^{-1}c(\theta)u^2 + ts(\theta)u + t^3 c(\theta) \Bigr)\\
 \notag n_u a_t k_\theta d_\lambda \cdot f_+&= \frac{\lambda}{(108)^{\frac{1}{4}}}\Bigl(t^{-3} s(3\theta), 3t^{-3}s(\theta)u + 3t^{-1}c(3\theta), 3t^{-3}s(\theta)u^2 + 6t^{-1}c(3\theta)u-3ts(3\theta), \\&\notag \qquad t^{-3}s(3\theta)u^3 + 3t^{-1}c(3\theta)u^2 - 3ts(3\theta)u - t^3 c(3\theta) \Bigr).
\end{align}

Set $B = B(C) = \{f = (a,b,c,d) \in V_\bR: 3a^2 + b^2 + c^2 + 3d^2 \leq C, |\Disc(f)| \geq 1\}$, which is a set which is $K$-invariant.
Define 
\begin{equation}
 B_{\pm}(u,t,\lambda, X) = n_u a_t d_\lambda B \cap\{v \in V_{\pm}: |\Disc(v)| \leq X\}.
\end{equation}

Given $f \in V_+$, let $n_u a_t k_\theta d_\lambda \cdot f_+ = f$, $u \in \bR, t \in \bR^+, \theta \in \left[0, \frac{1}{3}\right), \lambda \in \bR^+$ and set $g_f = n_u a_t k_\theta d_\lambda$.  Given $f \in V_-$, let $n_u a_t k_\theta d_\lambda \cdot f_- = f$ with $u \in \bR, t \in \bR^+, \theta \in [0, 1), \lambda \in \bR^+$ and set $g_f = n_u a_t k_\theta d_\lambda$.  In these expressions let $u = u(f), t= t(f), \theta = \theta(f)$.  
\begin{lemma}
 Suppose for some $v \in B$ that $f = n_u a_t \cdot v$ then $\log t = \log t_f + O(1)$.
\end{lemma}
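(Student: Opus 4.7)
The plan is to parameterize $v$ in Iwasawa coordinates with respect to the base point $f_{\pm}$ matching the sign of $\Disc(v)=\Disc(f)$, combine with the prefactor $n_u a_t$ using the $\SL_2(\bR)$ Iwasawa structure, and read off the $A$-coordinate of $f$. Compactness of $B$ then forces the $A$-parameter of $v$ itself to stay bounded, which yields the $O(1)$ error.

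Since $|\Disc(v)|\ge 1$ on $B$, one has $v\in V_+\cup V_-$, and the sign of $\Disc(v)$ agrees with that of $\Disc(f)$ because $\det(n_u a_t)=1$. Write
\[
 v = n_{u'}a_{t'}k_{\theta'}d_{\lambda'}\cdot f_{\pm}.
\]
The elementary commutation relation $a_t n_{u'}=n_{t^2 u'}a_t$ in $\SL_2(\bR)$ then gives
\[
 f = n_u a_t \cdot v = n_{u+t^2 u'}\,a_{tt'}\,k_{\theta'}\,d_{\lambda'}\cdot f_{\pm},
\]
which is already in Iwasawa form. By uniqueness of Iwasawa coordinates on $V_{\pm}$ modulo the finite stabilizer of $f_{\pm}$ (which is trivial for $f_-$ and acts only on the $k_{\theta}$ factor for $f_+$), the $A$-parameter of $f$ equals $tt'$. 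Thus $t_f=tt'$ and $\log t=\log t_f-\log t'$.

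It remains to show $\log t'=O(1)$ uniformly for $v\in B$. The set $B$ is compact in $V_{\bR}$, being closed and bounded by the quadratic constraint $3a^2+b^2+c^2+3d^2\le C$, and is separated from the singular set $S=\{\Disc=0\}$ by $|\Disc|\ge 1$, so $B\subset V_+\cup V_-$. Because the orbit map $(u',t',\theta',\lambda')\mapsto n_{u'}a_{t'}k_{\theta'}d_{\lambda'}\cdot f_{\pm}$ is a homeomorphism onto $V_{\pm}$ (after passing to $\theta'$ modulo the stabilizer), the preimage of the compact set $B\cap V_{\pm}$ is compact in the parameter space, so $t'$ is bounded above and away from $0$ uniformly over $v\in B$. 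This delivers $\log t'=O(1)$ and completes the estimate $\log t=\log t_f+O(1)$. There is no essential obstacle; the only things to be careful about are verifying the $NA$-commutation and confirming that bounded-plus-nonsingular lifts to bounded Iwasawa parameters, which is immediate from compactness once one notes that the $t'$-asymptotics of the explicit formulas for $n_{u'}a_{t'}k_{\theta'}d_{\lambda'}\cdot f_{\pm}$ force $|v|\to\infty$ as $t'\to 0$ or $t'\to\infty$.
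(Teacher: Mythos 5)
Your proof is correct and follows essentially the same route as the paper: write $v$ in homogeneous coordinates, use the commutation $a_t n_{u'} = n_{t^2 u'} a_t$ to obtain $t_f = t t'$, and conclude by boundedness of $t'$ over $v \in B$. The paper leaves the compactness step implicit ("Since $t(f) = tt'$, the lemma follows"); you just spell it out.
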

\begin{proof}
 Write $v = n_{u'}a_{t'} k_{\theta'}d_{\lambda'} \cdot f_{\pm}$.  Then $f = n_{u + t^2 u'} a_{tt'} k_{\theta'}d_{\lambda'} \cdot f_{\pm}$. Since $t(f) = tt'$, the lemma follows.
\end{proof}

Integrals over $V_\pm$ may be expressed, for $f \in C_0(V_{\pm})$,
\begin{align}
\label{integration_formulae} \int_{V_+} f(v) \frac{dv}{\Disc(v)} &= \int_{-\infty}^\infty \int_{0}^\infty \int_0^{\frac{1}{3}} \int_0^\infty f(n_u a_t k_\theta d_\lambda \cdot f_+) \frac{d\lambda}{\lambda} d\theta \frac{dt}{t^3} du\\ \notag
  \int_{V_-} f(v) \frac{dv}{|\Disc(v)|} &= \int_{-\infty}^\infty \int_{0}^\infty \int_0^{1} \int_0^\infty f(n_u a_t k_\theta d_\lambda \cdot f_-) \frac{d\lambda}{\lambda} d\theta \frac{dt}{t^3} du.
\end{align}
Thus $|da \wedge db \wedge dc \wedge dd| = \frac{\lambda^3}{t^3} |d\lambda \wedge d\theta \wedge dt \wedge du|$.

 Let $\omega$ be a smooth, non-negative $K$ invariant function supported in $B$.
Let 
\begin{align}
 M_+ &= \int_{B \cap V_+} \frac{\omega(v)dv}{\Disc(v)},\qquad
 M_- = \int_{B \cap V_-} \frac{\omega(v)dv}{|\Disc(v)|}.
\end{align}

The following lemma estimates the dependence in switching between rectangular and homogeneous coordinates.
\begin{lemma}\label{Jacobian_lemma}
 When $u, t, \theta$ vary in a Siegel set and $\lambda \geq 1$, and $v \in B$ the change of coordinates $(a,b,c,d)= n_u a_t k_\theta d_\lambda \cdot v$ satisfies
 \begin{align}
  \frac{\partial(a,b,c,d)}{\partial( u,t, \theta, \lambda)} &= \begin{pmatrix}0 & O(\lambda t^{-3})& O(\lambda t^{-1})& O(\lambda t) \\ O(\lambda t^{-4}) & O(\lambda t^{-2})& O(\lambda) & O(\lambda t^2)\\ O(\lambda t^{-3}) & O(\lambda t^{-1})& O(\lambda t)& O(\lambda t^3)\\
  O(t^{-3}) & O(t^{-1}) & O(t) & O(t^3)\end{pmatrix}\\ \notag
  \frac{\partial( u,t, \theta, \lambda)}{\partial(a,b,c,d)} &= \begin{pmatrix}O(\lambda^{-1}t^5) & O(\lambda^{-1}t^4)& O(\lambda^{-1}t^3)& O(t^3)\\ O(\lambda^{-1}t^3)&O(\lambda^{-1}t^2)&O(\lambda^{-1}t)& O(t)\\
  O(\lambda^{-1}t) & O(\lambda^{-1})& O(\lambda^{-1}t^{-1})& O(t^{-1}) 
  \\
  O(\lambda^{-1}t^{-1})& O(\lambda^{-1}t^{-2})& O(\lambda^{-1}t^{-3}) & O(t^{-3})\end{pmatrix}.
 \end{align}

\end{lemma}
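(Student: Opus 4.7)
The plan is direct computation in two steps: explicit differentiation for the forward Jacobian, and the cofactor formula for the inverse, using the determinant value already implicit in the paper's integration formulae.

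For the forward Jacobian, I would extend the explicit homogeneous-coordinate formulas that the paper has already displayed for $n_u a_t k_\theta d_\lambda \cdot f_\pm$ to an arbitrary $v \in B$. Writing $k_\theta \cdot v = (w_1(\theta), w_2(\theta), w_3(\theta), w_4(\theta))$, the $K$-invariance and compactness of $B$ force $w_i(\theta), w_i'(\theta) = O(1)$ uniformly. Then the sequential application of $d_\lambda$, $a_t$, and $n_u$ yields
\begin{align*}
a &= \lambda t^{-3} w_1, \qquad b = \lambda(3t^{-3}w_1 u + t^{-1} w_2), \\
c &= \lambda(3t^{-3}w_1 u^2 + 2 t^{-1} w_2 u + t w_3), \\
d &= \lambda(t^{-3} w_1 u^3 + t^{-1} w_2 u^2 + t w_3 u + t^3 w_4),
\end{align*}
which are structurally identical to the $f_\pm$ formulas displayed earlier. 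All sixteen entries of the forward Jacobian can now simply be read off by differentiating these expressions in $u, t, \theta, \lambda$ and invoking $|u| = O(1)$ from the Siegel-set constraint, so that the dominant power of $t$ in each polynomial coefficient is the one stated.

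For the inverse Jacobian, I would use $J^{-1} = \det(J)^{-1}\operatorname{adj}(J)$. The determinant is already available to us: the integration formula in the paper gives $|da \wedge db \wedge dc \wedge dd| = \lambda^3 t^{-3}\, |du \wedge dt \wedge d\theta \wedge d\lambda|$, so $|\det J| = \lambda^3/t^3$. Each entry of $J^{-1}$ is then (up to sign) a $3 \times 3$ minor of $J$ divided by $\lambda^3/t^3$. Bounding a representative minor by expanding over the six permutations and inserting the row estimates from Step 1 is routine; for instance, deleting row $u$ and column $a$ produces six terms each of magnitude $O(\lambda^2 t^2)$, so the resulting $(J^{-1})$ entry is $O(\lambda^2 t^2)/(\lambda^3/t^3) = O(\lambda^{-1} t^5)$, matching the stated $(1,1)$ entry. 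The remaining fifteen entries follow from the same expansion applied to each of the other minors.

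The main obstacle is not conceptual but combinatorial: one must verify that for every minor, all six permutation products have the same order of magnitude, so that no sign cancellation is required and the triangle inequality gives a sharp bound exactly matching the required division by $\det J$. This ``no-cancellation'' phenomenon is a feature of the diagonal $a_t$-scaling of the $V_{\bR}$ coordinates (the exponents $-3, -1, 1, 3$) paired with the polynomial structure introduced by $n_u$, and it is what ultimately makes the entrywise statement of the lemma clean. I would therefore present one explicit minor calculation as a prototype and tabulate the remaining entries, rather than write out all sixteen cofactors in full.
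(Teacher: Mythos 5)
Your proposal is correct and follows essentially the same route as the paper: compute the forward Jacobian by differentiating the explicit homogeneous-coordinate formulas (which the paper spells out for $f_\pm$; your observation that $K$-invariance and compactness of $B$ give $w_i, w_i' = O(1)$ uniformly extends this cleanly to any $v\in B$), then invert via $J^{-1} = (\det J)^{-1}\operatorname{adj}(J)$ using the determinant $\lambda^3 t^{-3}$ supplied by the integration formula. Your prototype minor computation for the $\partial u/\partial a$ entry is accurate, and your ``no-cancellation'' remark --- that all six permutation products in each $3\times 3$ minor share the same order of magnitude, so the triangle inequality already reproduces the division by $\det J$ --- is exactly the combinatorial fact that makes the entrywise bounds close without invoking sign cancellation; it is worth stating explicitly, as the paper leaves it implicit.

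One small point you should make precise: the integration formula $|da\wedge db\wedge dc\wedge dd| = \frac{\lambda^3}{t^3}|d\lambda\wedge d\theta\wedge dt\wedge du|$ is stated with base point $f_\pm$, whereas the lemma takes a general $v\in B$. Writing $v = h\cdot f_\pm$, unimodularity of $G^+$ together with $\Disc(n_u a_t k_\theta d_\lambda\cdot v) = \lambda^4\Disc(v)$ gives $|\det J| = |\Disc(v)|\cdot\frac{\lambda^3}{t^3}$, and the condition $v\in B$ pins $|\Disc(v)|$ between $1$ and an absolute constant. So $|\det J| \asymp \lambda^3 t^{-3}$ with implied constants uniform over $B$, which is what your division step actually needs. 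With that sentence added the argument is complete.
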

\begin{proof}
 The first Jacobian follows directly from the expression in homogeneous coordinates.  The second follows from combining the estimate for the adjugate matrix with the factor of integration $\lambda^3 t^{-3}$.
\end{proof}

The proof of the theorems rely on estimates for the derivatives and logarithmic derivatives of the discriminant which are uniform in the cuspidal parameter $t$.

For a multi-index $\alpha = (\alpha_1, \alpha_2, \alpha_3, \alpha_4)$ let $|\alpha| = \alpha_1 + \alpha_2 + \alpha_3 + \alpha_4$ and let 
$D^\alpha = D_a^{\alpha_1}D_b^{\alpha_2}D_c^{\alpha_3}D_d^{\alpha_4}$.
\begin{lemma}\label{derivative_lemma}
Let $C>0$ be a constant. Let $f_0$ be such that $|\Disc(f_0)| \geq 1$ and $3a^2 + b^2 + c^2 +3d^2 \leq C$.  Let $f= n_u a_t d_\lambda \cdot f_0$ with $u = O(1)$ and $t, \lambda \gg 1$. Then, with implicit constants depending on $C$,
\begin{equation}
 D^\alpha \Disc(f) = O\left(\lambda^{4-|\alpha|}t^{3|\alpha|}\right)
\end{equation}
and
\begin{equation}
 D^\alpha \log |\Disc(f)| = O\left(\lambda^{-|\alpha|}t^{3|\alpha|}\right)
\end{equation}
while
\begin{equation}
 \max_{D \in \{D_a^3, D_b^3, D_c^3, D_d^3\}} |D\log |\Disc(f)|| \gg \frac{1}{t^9 \lambda^3}.
\end{equation}
\end{lemma}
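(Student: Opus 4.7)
The plan has three parts, corresponding to the three displayed bounds; the first two are weight-and-scale bookkeeping, and the third reduces to a pointwise nonvanishing statement on $V_\bR \setminus S$.

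For the two upper bounds, set $\tau_j = \lambda t^{2j - 5}$, so $(\tau_1, \tau_2, \tau_3, \tau_4) = (\lambda t^{-3}, \lambda t^{-1}, \lambda t, \lambda t^3)$. An elementary expansion of $f = n_u a_t d_\lambda \cdot f_0$, using that $n_u$ is upper triangular on $V$, gives $|f_j| = O(\tau_j)$ for $u = O(1)$ and $f_0 = O(1)$. Since $\Disc$ is a degree-four, $\SL_2$-invariant polynomial, all its monomials carry $a_t$-weight zero, and since $\Disc(f) = \lambda^4 \Disc(f_0)$ with $|\Disc(f_0)| \geq 1$ we have $|\Disc(f)| \asymp \lambda^4$. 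Each $D_j$ raises the $a_t$-weight by $5-2j \leq 3$ and lowers the degree by one, so every monomial of $D^\alpha \Disc$ has degree $4-|\alpha|$ and weight at most $3|\alpha|$; evaluating via $|f_j| = O(\tau_j)$ gives the first bound. The Fa\`a di Bruno formula writes $D^\alpha \log|\Disc|$ as a sum of products $\prod_i D^{\beta_i}\Disc/\Disc$ with $\sum_i \beta_i = \alpha$, and combining with $|\Disc| \asymp \lambda^4$ yields the second.

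For the lower bound, the core computation is the chain rule applied to $f_0 = (n_u a_t d_\lambda)^{-1} f$, whose action on $V_\bR$ is upper triangular with diagonal entries $\tau_j^{-1}$, yielding
\begin{equation*}
D_j = \tau_j^{-1}\,\partial_{f_0^j} + \sum_{k > j} c_{kj}(u)\,\tau_k^{-1}\,\partial_{f_0^k},
\end{equation*}
with $c_{kj}(u)$ a polynomial in $u$ of degree $k-j$. In particular $D_d^3 = \tau_4^{-3} \partial_{d_0}^3$ exactly, while for $j < 4$, using $\tau_{j+1}^{-1}/\tau_j^{-1} = t^{-2}$, the cube has a leading term $\tau_j^{-3} \partial_{f_0^j}^3$ together with mixed-derivative terms each carrying an extra factor of $t^{-2}$ or smaller. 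Since $\log|\Disc(f)| = 4\log\lambda + \log|\Disc(f_0)|$, at fixed $u, t, \lambda$ we obtain
\begin{equation*}
D_j^3 \log|\Disc(f)| = \tau_j^{-3}\,\partial_{f_0^j}^3 \log|\Disc(f_0)| + O(\tau_j^{-3} t^{-2}).
\end{equation*}
Hence if for some $j$ we have $|\partial_{f_0^j}^3 \log|\Disc(f_0)|| \geq c_0 > 0$, then for $t$ sufficiently large $|D_j^3 \log|\Disc(f)|| \gg \tau_j^{-3} \geq \tau_4^{-3} = \lambda^{-3} t^{-9}$, as required.

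It thus suffices to prove the pointwise claim: at every $f_0 \in V_\bR$ with $\Disc(f_0) \neq 0$, at least one of the four $\partial_{f_0^j}^3 \log|\Disc(f_0)|$ is nonzero; together with continuity and compactness of $\{f_0 : 3 a^2 + b^2 + c^2 + 3d^2 \leq C,\ |\Disc(f_0)|\geq 1\}$, this gives the uniform $c_0 > 0$. Writing $\partial_{f_0^j}^3 \log|\Disc| = L_j / \Disc^3$, the $L_j$ are explicit polynomials; using $\partial_a^3 \Disc = \partial_d^3 \Disc = 0$ one finds $L_a = 2 P_a(P_a^2 + 81 d^2 \Disc)$ with $P_a = \partial_a \Disc$, and the classical Hessian-discriminant identity $P_a^2 + 108 d^2 \Disc = 16 (c^2 - 3 b d)^3$ (and its $a \leftrightarrow d$ analogue) makes the common vanishing loci of $L_a, L_d$ transparent. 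I expect the main obstacle to be the algebraic verification that the simultaneous vanishing of $L_a, L_b, L_c, L_d$ forces $\Disc = 0$: this is a finite polynomial identity in degree nine in four variables, amenable to a Gr\"obner-basis computation and streamlined by the involution $(a, b, c, d) \leftrightarrow (d, c, b, a)$ preserving $\Disc$.
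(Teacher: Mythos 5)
Your proof is correct and follows essentially the same route as the paper's: the first two bounds via the torus-weight/homogeneity structure of $\Disc$ and Fa\`a di Bruno, and the third by reduction (via the chain rule in the $f_0$-coordinates and compactness of $\{|\Disc(f_0)|\ge 1,\ 3a^2+b^2+c^2+3d^2\le C\}$) to a pointwise nonvanishing statement. The one place you add genuine content is in unwinding the triangular chain-rule formula $D_j=\tau_j^{-1}\partial_{f_0^j}+\sum_{k>j}c_{kj}(u)\tau_k^{-1}\partial_{f_0^k}$ to isolate the leading term $\tau_j^{-3}\partial_{f_0^j}^3\log|\Disc(f_0)|$ with an $O(\tau_j^{-3}t^{-2})$ remainder, where the paper simply invokes homogeneity; but like the paper (which says the compact-set lower bound ``can be established, for instance, in Mathematica''), you leave the final pointwise nonvanishing of $\max_j|\partial_{f_0^j}^3\log|\Disc(f_0)||$ to a finite algebraic/computational check rather than completing it, so the load-bearing unverified step is the same in both.
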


\begin{proof}
Use $n_u a_t = a_t n_{\frac{u}{t^2}}$ to write $f = a_t n_{\frac{u}{t^2}} d_\lambda f_0$, and let $u_0 = \frac{u}{t^2} = O(1)$.  When $u_0$ is held fixed and $t$ and $\lambda$ vary,  $D^\alpha \Disc(f)$ is homogeneous in $\lambda$ and in $t$.  For instance, the discriminant itself is a function of $\lambda$ only, and each coordinate is homogeneous under the action of $a_t$.  Differentiating in a given coordinate changes the homogeneous degree in $\lambda$ and $t$.  The degree in $\lambda$ is $4-|\alpha|$ and the degree in $t$ ranges between $t^{-3|\alpha|}$ and $t^{3|\alpha|}$.  This proves the first bound.

We have $D^\alpha \log |\Disc(f)|$ is the sum of monomials, each of which is the product of terms of type $\frac{D^\beta \Disc(f)}{\Disc(f)}$, with the total degree of the monomial equal to $|\alpha|$.  The claimed bound now follows from the first part.

By the homogeneity in $t$ and $\lambda$, for the last part it suffices to check that if $f_1 = n_{u_0}f$ then 
\begin{equation}
 \max_{D\in \{D_a^3, D_b^3, D_c^3, D_d^3\}} |D \log |\Disc(f_1)|| \gg 1.
\end{equation}
The estimate now is taken over $f_1$ in a compact set, and the lower bound can be established, for instance, in Mathematica.

\end{proof}

\begin{lemma}\label{f_variation_lemma}
 Let $f = (a,b,c,d) \in \bR^4$ be a form with $\lambda_f \geq 1$ and $t_f \gg 1$.  For every constant $C_1 > 1$ there is a constant $C_2 > 0$ so that if $\|\tilde{f} - f\|_2 \leq C_2 \frac{\lambda_f}{t_f^3}$ then 
 \begin{equation}
  \frac{\lambda_f}{C_1} \leq \lambda_{\tilde{f}} \leq C_1 \lambda_f.
 \end{equation}
\end{lemma}
\begin{proof}
 We prove this for $\lambda_f^4$ and $\lambda_{\tilde{f}}^4$ instead, which is the magnitude of the discriminant, and hence given by a degree 4 polynomial in the coefficients of the form.  Let $R = \|\tilde{f} - f\|_2$.  Since the degree 4 Taylor expansion of the discriminant is exact, with $\alpha = \frac{R t_f^3}{\lambda_f} $, 
 \begin{equation}
  \lambda_{\tilde{f}}^4 - \lambda_f^4 = O\left(\lambda_f^4 (\alpha + \alpha^2 + \alpha^3 + \alpha^4)  \right)
 \end{equation}
 which suffices for the proof.
\end{proof}
A similar claim now holds for $t_f$.
\begin{lemma}\label{t_variation_lemma}
 Let $f = (a,b,c,d) \in \bR^4$ be a form with $\lambda_f \geq 1$ and $t_f \gg 1$.  For every constant $C_1 > 1$ there is a constant $C_2 > 0$ so that if $\|\tilde{f} - f\|_2 \leq C_2 \frac{\lambda_f}{t_f^3}$ then 
 \begin{equation}
  \frac{t_f}{C_1} \leq t_{\tilde{f}} \leq C_1 t_f.
 \end{equation}
 
\end{lemma}

\begin{proof}
 By applying the previous lemma, we may restrict to a ball about $f$ so that $\lambda$ is within constants of $\lambda_f$ throughout the ball.  Now integrate in one direction at a time the partial derivative bounds of Lemma \ref{Jacobian_lemma}.  For instance, the bound $\frac{\frac{\partial t}{\partial a}}{t^4} = O(\lambda^{-1})$ obtains the estimate
 \begin{equation}
  \left| \frac{t_f(a_1)^3}{t_f(a_2)^3} - 1\right| \leq \frac{c |a_2-a_1| t_f(a_1)^3}{\lambda}
 \end{equation}
which is of the correct shape.  The remaining coordinates are similar, but obtain better bounds.
\end{proof}

The previous lemmas permit the pointwise bound of Lemma \ref{Jacobian_lemma} to hold as a sup bound in balls of radius $\ll \frac{\lambda}{t^3}$ about a form $f$.

Let $h(n)$ denote the number of classes of integral binary cubic forms of discriminant $n$.  We recall an easy consequence of the Davenport-Heilbronn Theorem \cite{DH71},
\begin{equation}\label{average_order_h_n}
 \sum_{0 < |n| \leq X} h(n) \ll X
\end{equation}
as $X \to \infty$.

\subsection{Bhargava's averaging technique}
Bhargava's averaging trick counts lattice points by averaging over many fundamental domains.  This is useful in bounding the number of points appearing in a cusp, as the following lemmas show.

\begin{lemma}
 Let $u = O(1)$ and $t \gg 1$.  The number of lattice points $(a,b,c,d)$ in $B(u,t,\lambda, X)$ with $a \neq 0$ is 
 \begin{equation}
  \left\{ \begin{array}{lll} 0 && \frac{C\lambda}{t^3} <1\\ \vol(B(u,t,\lambda, X)) + O(\max(C^3t^3 \lambda^3, 1)) && \text{otherwise}\end{array}\right..
 \end{equation}
The number of lattice points with $a = 0$ is
\begin{equation}
 \left\{ \begin{array}{lll} 0 && \frac{C\lambda}{t} < 1\\ O(\max(C^3t^3\lambda^3,1)) && \text{otherwise}\end{array}\right..
\end{equation}

\end{lemma}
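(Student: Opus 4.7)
The plan is to exploit the explicit coordinate formulas for $n_u a_t d_\lambda \cdot (a',b',c',d')$ established above, combined with a slicing-by-$a$ lattice point count. Since $B$ is $K$-invariant and lies inside the ellipsoid $\{3a'^2+b'^2+c'^2+3d'^2 \leq C\}$, the homogeneous-coordinate formulas together with $u = O(1)$ and $t \gg 1$ give that every $(a,b,c,d) \in B(u,t,\lambda,X)$ satisfies $|a| \ll C^{1/2}\lambda t^{-3}$, $|b| \ll C^{1/2}\lambda t^{-1}$, $|c| \ll C^{1/2}\lambda t$, $|d| \ll C^{1/2}\lambda t^3$. Moreover, since $\Disc(n_u a_t d_\lambda \cdot f) = \lambda^4 \Disc(f)$, elements of $B(u,t,\lambda,X)$ have $|\Disc| \geq \lambda^4 \geq 1$.

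For the case $a = 0$, I would first observe that a lattice point $(0,b,c,d)$ has $\Disc(0,b,c,d) = b^2c^2 - 4b^3 d$, which forces $b \neq 0$ once $|\Disc| \geq 1$. Consequently, if $C\lambda/t < 1$ (with the appropriate implicit constant absorbing the $C^{1/2}$), no nonzero integer $b$ is admissible and there are no lattice points. Otherwise, I would crudely bound the count by the product of the three coordinate ranges, $(C\lambda/t)(C\lambda t)(C\lambda t^3) \ll C^3 t^3\lambda^3$; the $\max(\cdot,1)$ in the statement then absorbs the edge case where this quantity is less than $1$ but a single boundary lattice point survives.

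For the case $a \neq 0$, I would slice $B(u,t,\lambda,X)$ by integer values of $a$ with $1 \leq |a| \ll C\lambda/t^3$. If $C\lambda/t^3 < 1$ no such $a$ exists; otherwise, for each admissible $a$, the slice is a $3$-dimensional region in $(b,c,d)$-space, cut out by the ellipsoid condition on $(a',b',c',d') \in B$, the discriminant condition $\lambda^4 \leq |\Disc| \leq X$, and the affine change of variable induced by $n_u a_t d_\lambda$. I would invoke Davenport's lattice point counting principle for semi-algebraic sets of bounded complexity, yielding a count equal to the slice volume plus $O(M_c M_d)$, where $M_c \ll C\lambda t$ and $M_d \ll C\lambda t^3$ are the dominant face dimensions. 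Summing the volumes over the integer values of $a$ recovers $\vol(B(u,t,\lambda,X))$ (the $a=0$ stratum being $4$-dimensionally negligible), while summing the errors gives at most $(C\lambda/t^3) \cdot C^2 \lambda^2 t^4 \ll C^3 t\lambda^3$, which is comfortably absorbed into $O(C^3 t^3 \lambda^3)$ since $t \gg 1$.

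The principal obstacle is verifying that Davenport's counting lemma applies with the stated shape of error. This requires that the boundary of each slice has controlled Lipschitz complexity, which holds here because the slice is cut out by a bounded number of algebraic inequalities of bounded degree (the quadratic ellipsoid condition and two quartic discriminant conditions), so the boundary decomposes into a bounded number of smooth pieces each bilipschitz to a piece of the enclosing box. Once this is in place, the two cases combine to give the stated bounds.
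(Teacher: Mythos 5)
Your proof is correct in substance and reconstructs from first principles the argument that the paper simply delegates to \cite{BST13} Lemma 25, namely a Davenport-type lattice-point count; for the $a=0$ case your argument is identical to the paper's (the observation that $\Disc(0,b,c,d) = b^2(c^2 - 4bd)$ forces $b \neq 0$ when the discriminant is nonzero, followed by a trivial product bound on the $b,c,d$ ranges). The one point you elide is that summing the $3$-dimensional slice volumes over integer values of $a$ does not \emph{exactly} recover $\vol(B(u,t,\lambda,X))$; the parenthetical about the $a=0$ stratum being negligible addresses a different issue. There is a genuine Riemann-sum discretization error, bounded by the total variation of the slice-volume function of $a$, hence by $O(1)$ times the maximal slice volume $\ll C^3\lambda^3 t^3$ once one notes that this function is piecewise monotone with a bounded number of pieces (because the region is semi-algebraic of bounded complexity). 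This has exactly the claimed error order, so your conclusion is unaffected, but the step deserves to be stated. Alternatively, you can sidestep the slicing entirely by applying the higher-dimensional Davenport/Lipschitz principle directly to the $4$-dimensional region $B(u,t,\lambda,X)$: the count of all lattice points equals $\vol(B(u,t,\lambda,X))$ plus an error dominated by the largest $3$-dimensional coordinate projection, namely onto the $(b,c,d)$-hyperplane, which has measure $\ll C^3\lambda^3 t^3$; subtracting the count of points with $a=0$, which you have already bounded separately, yields the first display without any summation over $a$.
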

\begin{proof}
 The first part is Lemma 25 of \cite{BST13}.  For the second part, if $a = 0$ and $\Disc(f) \neq 0$ then $b \neq 0$, which imposes the constraint $\frac{C\lambda}{t} \geq 1$.  The number of choices for $b,c,d$ are now $O\left(\frac{C\lambda}{t}\right), O(C\lambda t)$ and $O(C\lambda t^3)$, which proves the remaining claim.
\end{proof}

\begin{lemma}\label{large_T_lemma}
 When $v$ is chosen at random from $B$ according to the probability measure proportional to $\frac{\omega(v)dv}{|\Disc(v)|}$, where $\omega$ is a smooth non-negative function, the expected number of lattice points of discriminant of size at most $X$ with $t(f) > T$ and $a \neq 0$ is $O\left(\frac{X}{T}\right) + O\left(X^{\frac{5}{6}}\right)$.  
 
 The expected number of lattice points of discriminant of size at most $X$ with $t(f) \leq T$ and $a = 0$ is $O\left(X^{\frac{3}{4}}T\right)$.
\end{lemma}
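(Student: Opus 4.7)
The plan is to invoke Bhargava's averaging trick to bound the expected count by an integral over the Siegel fundamental domain for $\Gamma\backslash G^+$. Unfolding the average over $v \in B$ against the sum over $f \in V_\zed$ via left-invariance and unimodularity of the Haar measure, one obtains
\[
E \ll \frac{1}{M_\pm} \int_{\sF \cdot \Lambda} \#\bigl\{f \in V_\zed \cap gB : |\Disc(f)| \leq X,\ \text{conditions on }a,\, t(f)\bigr\} \, dg,
\]
with Haar measure $du\, \frac{dt}{t^3}\, d\theta\, \frac{d\lambda}{\lambda}$ in Iwasawa coordinates $g = n_u a_t k_\theta d_\lambda$. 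By the earlier lemma identifying $t_f$ with $t$ up to a bounded factor for any $f \in gB$, the condition $t(f) > T$ becomes $t \gg T$ on the integration variable, while $|\Disc(f)| \leq X$ forces $\lambda \ll X^{1/4}$.

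For the first assertion, apply the preceding counting lemma in the $a \neq 0$ case: the inner count vanishes unless $\lambda \gg t^3$, and is otherwise bounded by $\vol(B(u,t,\lambda,X)) + O(t^3\lambda^3)$, with the volume of order $\lambda^4$ in the relevant range. Substituting into the Haar integral gives main term
\[
\int_T^{X^{1/12}} \frac{dt}{t^3} \int_{t^3}^{X^{1/4}} \lambda^3 \, d\lambda \ll \frac{X}{T^2} \leq \frac{X}{T},
\]
and error term
\[
\int_T^{X^{1/12}} dt \int_{t^3}^{X^{1/4}} \lambda^2 \, d\lambda \ll X^{3/4}\cdot X^{1/12} = X^{5/6},
\]
yielding the claimed $O(X/T) + O(X^{5/6})$ bound.

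For the second assertion, the $a = 0$ part of the counting lemma requires only the weaker constraint $\lambda \gg t$ for nonvanishing, and then bounds the count by $O(t^3 \lambda^3)$; integrating over $t$ in a Siegel interval bounded above by $T$ and $\lambda \in [t, X^{1/4}]$ against the Haar measure yields
\[
\int dt \int_t^{X^{1/4}} \lambda^2 \, d\lambda \ll T X^{3/4}.
\]
The Bhargava unfolding identity is the main conceptual step and is essentially routine given the framework assembled in the preceding subsections; the proof then reduces to direct integration of the lattice-point count from the previous lemma together with the relation $t_f \asymp t$. The step I would check most carefully is the proper accounting of the stabilizer factors and the correct identification of the $\lambda$-integration ranges, so that the $a \neq 0$ and $a = 0$ regimes (governed by the thresholds $\lambda \gg t^3$ and $\lambda \gg t$ respectively) are cleanly separated.
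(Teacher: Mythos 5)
Your proposal is correct and carries out exactly the one-line plan the paper gives: integrate the lattice-point counts of the preceding lemma against the Haar measure $du\,\frac{dt}{t^3}\,d\theta\,\frac{d\lambda}{\lambda}$ via the integration formulae, using $|\Disc| \asymp \lambda^4$ to truncate $\lambda \ll X^{1/4}$ and the earlier lemma identifying $t_f \asymp t$ to convert the $t(f)$ condition into a constraint on the integration variable. Your intermediate bound $X/T^2$ for the main term is slightly sharper than the stated $X/T$ (harmless, and irrelevant to how the lemma is later applied), and the accounting of the thresholds $\lambda \gg t^3$ versus $\lambda \gg t$, the $\lambda$-ranges, and the error integrals all check out.
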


\begin{proof}
 This follows on integrating the bounds from the previous lemma together with the integration formulae in (\ref{integration_formulae}), keeping in mind $|\Disc(v)| = \lambda^4$ in the formulae.
\end{proof}

\subsection{Bounds for exponential sums}\label{exponential_sum_section}

Our theorems exhibit cancellation in exponential sums using van der Corput's inequality \cite{S04}, p.216. 
\begin{lemma}[van der Corput's inequality] Let $c_1, c_2, ..., c_N$ be complex numbers and let $1 \leq H <N$.  Then
 \begin{align} 
  \left|\sum_{n=1}^N c_n \right|^2 &\leq \frac{N+H}{H+1}\sum_{n=1}^N |c_n|^2\\\notag &+ \frac{2(N+H)}{H+1}\sum_{h=1}^H\left(1 - \frac{h}{H+1}\right) \left|\sum_{n=1}^{N-h} c_{n+h}\overline{c_n}\right|.
 \end{align}
\end{lemma}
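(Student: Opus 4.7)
The plan is the standard \emph{A-process} proof by shift-and-Cauchy--Schwarz. First, I would extend the sequence to all of $\bZ$ by setting $c_n = 0$ for $n \notin \{1, \ldots, N\}$, so that the sum is invariant under translation. For each $h \in \{0, 1, \ldots, H\}$ this gives $\sum_{n \in \bZ} c_n = \sum_{n \in \bZ} c_{n+h}$, and averaging over $h$ produces
\[
(H+1) \sum_{n=1}^N c_n = \sum_{n \in \bZ} \sum_{h=0}^H c_{n+h}.
\]
The inner sum is supported on the interval $1-H \leq n \leq N$, which contains at most $N+H$ integers. Applying Cauchy--Schwarz with the indicator of this support set as one factor yields
\[
(H+1)^2 \left|\sum_{n=1}^N c_n\right|^2 \;\leq\; (N+H) \sum_{n \in \bZ} \left|\sum_{h=0}^H c_{n+h}\right|^2.
\]

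Next, I would expand the square on the right and regroup by $h = h_1 - h_2$. The diagonal $h_1 = h_2$ contributes $(H+1)\sum_n |c_n|^2$. For $h \in \{1, \ldots, H\}$ there are exactly $H+1-h$ ordered pairs $(h_1, h_2) \in \{0, \ldots, H\}^2$ with $h_1 - h_2 = h$, and the same count for $-h$; after the change of variable $m = n + \min(h_1,h_2)$, the $h$ and $-h$ contributions are complex conjugates of one another. Therefore
\[
\sum_{n \in \bZ} \left|\sum_{h=0}^H c_{n+h}\right|^2 = (H+1)\sum_{n=1}^N |c_n|^2 + 2 \sum_{h=1}^H (H+1-h)\, \RE \sum_{n=1}^{N-h} c_{n+h}\overline{c_n}.
\]

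Finally, I would bound $\RE$ by the absolute value, divide through by $(H+1)^2$, and rewrite $H+1-h = (H+1)\bigl(1 - \tfrac{h}{H+1}\bigr)$; the stated inequality follows. No step is genuinely obstructive: the only bit requiring care is the bookkeeping of the pair count $H+1-h$ and the identification of the $h<0$ contributions with conjugates of the $h>0$ ones, which is what produces the factor of $2$ and the triangular weight $1 - \tfrac{h}{H+1}$ on the right-hand side.
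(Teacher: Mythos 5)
Your proof is correct and is the standard Weyl--van der Corput A-process argument: zero-extend, average over shifts $h\in\{0,\dots,H\}$, apply Cauchy--Schwarz against the indicator of the length-$(N+H)$ support, expand the square, and regroup by the difference $h_1-h_2$, using that the $+h$ and $-h$ contributions are conjugate and that each $h\in\{1,\dots,H\}$ arises from exactly $H+1-h$ ordered pairs, which produces the triangular weight $1-h/(H+1)$ after dividing by $(H+1)^2$. The paper states this lemma without proof (it is a classical inequality), so there is no paper argument to compare against; your write-up is a complete and correct proof, and the only bookkeeping point worth stressing, as you already do, is the exact pair count $H+1-h$ and the support count $N+H$, both of which you handle correctly.
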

\noindent 
We also use the simple bound for a linear phase, for $\alpha \in \bR\setminus \zed$,
\begin{equation}
 \sum_{j=1}^N e^{2\pi i \alpha j} \ll \min\left(N, \frac{1}{\|\alpha\|_{\bR/\zed}}\right).
\end{equation}

The following estimates for exponential sums are used in the proofs.  Let $|\Disc(f)| \asymp Y$, $t_f \ll T_1$ and $B_R(f)$ be the $\infty$-ball of radius $R$ about $f$.

\begin{lemma}\label{exponential_sum_lemma}
Let $Y\gg \tau^{\frac{4}{3}}T_1^{12}$ and $T_1 = o\left(Y^{\frac{1}{84}} \right)$.  With the choice $R = \frac{Y^{\frac{1}{4}}}{\tau^{\frac{7}{27}} T_1^{\frac{7}{3}}}$, we have the bound
 \begin{equation}
  \E_{y \in B_{R}}\left[|\Disc(f+y)|^{-i\tau}\right] \ll   \frac{T_1^{\frac{8}{3}}}{\tau^{\frac{1}{27}}}(\log \tau)^{\frac{4}{9}}.
 \end{equation}

\end{lemma}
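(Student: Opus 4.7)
The plan is to reduce the four-dimensional oscillatory integral over $B_R(f)$ to a one-dimensional van der Corput estimate in the coordinate direction with largest third derivative. Write the phase $\phi(y) = -\tau \log|\Disc(f+y)|$. By Lemma \ref{derivative_lemma}, at $f$ there is a coordinate direction $j \in \{1,2,3,4\}$ for which the pure third derivative satisfies $|D_j^3 \phi| \gg \tau/(T_1^9 Y^{3/4})$; moreover all third derivatives are bounded by $O(\tau T_1^9/Y^{3/4})$ and all fourth derivatives by $O(\tau T_1^{12}/Y)$. Lemmas \ref{f_variation_lemma} and \ref{t_variation_lemma} ensure that these bounds propagate uniformly across $B_R(f)$, since the radius $R = Y^{1/4}/(\tau^{7/27} T_1^{7/3})$ lies well inside the scale $\lambda_f/t_f^3 \asymp Y^{1/4}/T_1^3$ on which $\lambda$ and $t$ vary appreciably.

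For the one-dimensional step, I would fix the three transverse coordinates $y^\bot$ and study $I(y^\bot) = \int_{-R}^R e^{i\phi(y_j, y^\bot)} dy_j$. Applying Cauchy-Schwarz with a shift parameter $0 < H \leq R$ via the standard Weyl--van der Corput differencing yields $|I(y^\bot)|^2 \ll RH + (R/H)\int_0^H |\int e^{i\psi_h(y_j)} dy_j|\, dh$, where the differenced phase $\psi_h(y_j) = \phi(y_j+h, y^\bot) - \phi(y_j, y^\bot)$ has second derivative $\partial_{y_j}^2 \psi_h \asymp h\cdot D_j^3 \phi$, of size $\gg h\tau/(T_1^9 Y^{3/4})$ by the propagated lower bound. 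Van der Corput's classical second derivative estimate then gives $|\int e^{i\psi_h} dy_j| \ll (h\tau/(T_1^9 Y^{3/4}))^{-1/2}$, and optimizing $H \asymp (\tau/(T_1^9 Y^{3/4}))^{-1/3}$ produces $|I(y^\bot)|^2 \ll R \cdot (\tau/(T_1^9 Y^{3/4}))^{-1/3}$.

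Integrating this 1D bound trivially over the remaining three coordinates and dividing by the volume $R^4$ yields the average $\ll R^{-1/2} (\tau/(T_1^9 Y^{3/4}))^{-1/6}$. Substituting the value of $R$ and simplifying reproduces the claimed $T_1^{8/3}/\tau^{1/27}$. The logarithmic factor $(\log\tau)^{4/9}$ arises from dyadic decomposition of the shift integral in $h$ (so that the second derivative test is applied at a single scale on each piece) combined with sub-partitioning $B_R(f)$ into the at most four subregions on which a fixed coordinate direction realizes the maximum in the third-derivative lower bound of Lemma \ref{derivative_lemma}.

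The main obstacle is verifying the uniform applicability of the second derivative test to $\psi_h$ on $[-R, R]$, which requires controlling the sign variation of $\phi'''$. This is handled by the fourth derivative bound $|\phi^{(4)}| = O(\tau T_1^{12}/Y)$, which combined with the hypothesis $Y \gg \tau^{4/3} T_1^{12}$ ensures the total variation $R \cdot |\phi^{(4)}|$ does not overwhelm the third derivative lower bound, so $\phi'''$ (and hence $\psi_h''$) has controlled sign after at most logarithmically many sub-partitions. The hypothesis $T_1 = o(Y^{1/84})$ further ensures that the optimized shift parameter $H$ remains safely below $R$, so the differencing argument is valid.
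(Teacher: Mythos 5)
Your outline follows the same skeleton as the paper's proof — Taylor expand the phase to degree four, use Lemma~\ref{derivative_lemma} to find a coordinate direction with a third derivative bounded below by $\alpha \gg \tau/(T_1^9 Y^{3/4})$, and run a one--dimensional van der Corput argument in that direction — but the van der Corput step is carried out differently and this is where a genuine gap appears. You apply a \emph{single} Weyl differencing and then the \emph{continuous} second derivative test $\left|\int e^{i\psi_h}\,dy_j\right| \ll (h\alpha)^{-1/2}$. The paper instead applies van der Corput's inequality \emph{twice}, reducing the cubic phase to a linear one and using the elementary bound $\sum_j e(cj) \ll \min(N, 1/\|c\|)$ after forcing the linear coefficient to be at most $1/2$. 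For a continuous average over a ball, your one--differencing estimate gives exactly the same exponent $R^{-1/2}\alpha^{-1/6}$ as the paper's double--differencing, so the exponent bookkeeping is fine.

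The problem is that $\E_{y\in B_{R(Y)}}$ in this lemma is a \emph{discrete} lattice average, per the paper's notation $\E_{r\in R}f(r) = \frac{1}{|R|}\sum_{r\in R}f(r)$ and the definition $B_R = \{y \in V_\zed : \|y\|_\infty \leq R\}$ used when the lemma is invoked to bound $\Sigma_1''(Y)$. The discrete analogue of the second derivative test carries an extra main term: if the quadratic phase has second derivative of size $\lambda$ on an interval of length $N$, the bound is $\ll N\lambda^{1/2} + \lambda^{-1/2}$, not merely $\lambda^{-1/2}$. In your optimization, after summing over the shift $h \leq H = \alpha^{-1/3}$ and applying Cauchy--Schwarz, that extra term contributes $N^2 H^{1/2}\alpha^{1/2} = N^2\alpha^{1/3}$ to $|S|^2$, which has to be compared to the target $N\alpha^{-1/3}$; the ratio is $N\alpha^{2/3} \asymp R\alpha^{2/3}$. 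With the lemma's choice of $R$ and the range of $\alpha$ permitted by Lemma~\ref{derivative_lemma}, $R\alpha^{2/3}$ can be as large as a positive power of $\tau$ (roughly $\tau^{2/27}T_1^{2/3}$), so the extra term is not negligible and your bound fails. The two--differencing route avoids this because the post--differencing phase is \emph{linear}, and the discrete linear exponential sum bound has no spurious $N\lambda^{1/2}$ term once the coefficient is constrained below $1/2$. To repair your argument you would need either to perform the second differencing as in the paper, or to replace the lattice average by a continuous one and justify that substitution — and the latter is not straightforward here because the phase gradient $D_j(\tau\log|\Disc|)$ is large, so the discrete sum is not well approximated by the integral.

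Two smaller remarks: the monotonicity/sign control of $\psi_h''$ that you worry about is indeed handled by $R\delta \ll \alpha$, which follows from $\delta R^4 = o(1)$ together with $\alpha R^3 \gg 1$, so that part of your argument is fine. And the origin of the $(\log\tau)^{4/9}$ factor is a little different from what you describe: in the paper it comes from the $(\log H_1 \log H_2)^{1/4}$ in the double van der Corput inequality, propagated through the balancing of the two error terms (the paper actually takes $R$ with an extra $(\log\tau)^{1/9}$), rather than from a dyadic decomposition of the shift integral.
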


\begin{proof}
 The proof uses the following parameters.
\begin{itemize}
 \item $R$, radius of averaging ball
 \item $R_1<R$, a spacing parameter necessary to obtain cancellation in exponential sums after differencing
 \item $N = 2 \frac{R}{R_1} + O(1)$, the length of the exponential sum over which we find cancellation
 \item $H_1 = O(N)$, first van der Corput parameter
 \item $H_2 = O(N)$, second van der Corput parameter
 \item $\alpha = \max_{D \in \{D_a^3, D_b^3, D_c^3, D_d^3\}} |D\tau \log|\Disc(f+y)|_{y=0}|$.  This satisfies
 \begin{equation}
  \frac{\tau}{T_1^9 Y^{\frac{3}{4}}} \ll \alpha \ll \frac{\tau T_1^9}{Y^{\frac{3}{4}}}.
 \end{equation}

 \item $\delta = \frac{\tau T_1^{12}}{Y}$, a bound up to constants for fourth derivatives $D^\beta \tau \log |\Disc(f+y)|$, $|\beta| = 4$.
\end{itemize}
The argument will require $\delta R^4 = o(1)$ and $\alpha R^3 \gg 1$, which imposes the constraint $T_1 = o(Y^{\frac{1}{84}})$.

 Taylor expand to degree 4 to express 
 \begin{align}
  &\frac{1}{|\Disc(f+y)|^{i\tau}}\\&\notag = \exp\left(-i\tau \log |\Disc(f)| - i\tau \sum_{0 < |\beta| \leq 3} \frac{D^\beta \log |\Disc(f)|}{\beta!}y^\beta + O\left(\delta R^4\right) \right).
 \end{align}
 Subject to the condition
 \begin{equation}
  \delta R^4 = o(1),
 \end{equation}
the error contributes $O\left(\delta R^4\right)$ to the average.

 By Lemma \ref{derivative_lemma}, one of $D_a^3, D_b^3, D_c^3, D_d^3$ applied to $\log |\Disc(f)|$ is $\gg \frac{1}{T_1^9 |\Disc(f)|^{\frac{3}{4}}}$, say without loss of generality that $D_a^3$ satisfies this bound. The third derivatives also satisfy the upper bound $\ll \frac{T_1^9}{|\Disc(f)|^{\frac{3}{4}}}$. Let $z = f+y$ and $F(z) = -\tau\log |\Disc(f)| - \tau \sum_{0 < |\alpha| \leq 3} \frac{D^\alpha \log |\Disc(f)|}{\alpha!}y^\alpha$ and let for some parameter $1 \leq R_1 \leq R$, $z_n = z+ (n R_1,0,0,0)$,
\begin{equation}
 S_z = \sum_n \one(z_n \in f + B_R) e^{iF(z_n)}
\end{equation}
which is a sum of length $N = 2 \frac{R}{R_1} + O(1)$ by the definition of $z_n$ and the support of the indicator function. 

We bound $S_z$ by applying van der Corput's inequality twice.  Let $H_1, H_2 \ll N$ be parameters, and define $\Delta_h G(n) = G(n+h)-G(n)$.  Applying van der Corput once
\begin{align}
 |S_z|^2 \leq &\frac{(N+H_1)N}{H_1}  \\ \notag&+\frac{2(N+H_1)}{H_1+1}\sum_{h_1=1}^{H_1}\left(1- \frac{h_1}{H_1+1} \right)\left|\sum_{n} e^{-i \Delta_{h_1} F(z_n)}\one\left(z_n, z_{n+h_1} \in f + B_R \right)\right|.
\end{align}
Let $S_{z,h_1} = \sum_{n} e^{-i \Delta_{h_1} F(z_n)}\one\left(z_n, z_{n+h_1} \in f + B_R \right)$ and bound
\begin{align}
 |S_{z,h_1}|^2 &\leq \frac{(N -h_1 + H_2)N}{H_2}\\ \notag&+ \frac{2(N-h_1 + H_2)}{H_2+1} \sum_{h_2 = 1}^{H_2} \left(1 - \frac{h_2}{H_2+1}\right) \left|\sum_{n}e^{-i \Delta_{h_1}\Delta_{h_2}F(z_n)}\one\left(z_n,  z_{n+h_1+h_2} \in f + B_R\right) \right|.
\end{align}
The phase in the exponent is now a linear function of $n$ with leading coefficient of order
$\alpha R_1^3 h_1 h_2$.
Make the constraint that this is $\leq \frac{1}{2}$.
Let 
\begin{equation}S_{z, h_1, h_2} = \sum_{n}e^{-i \Delta_{h_1}\Delta_{h_2}F(z_n)}\one\left(z_n,  z_{n+h_1+h_2} \in f + B_R\right).\end{equation}  Thus
\begin{equation}
 |S_{z,h_1, h_2}| \ll \frac{1}{\alpha R_1^3 h_1 h_2 }.
\end{equation}
Combining the above estimates with Cauchy-Schwarz obtains
\begin{align}
 |S_z|^4 &\ll \frac{N^4}{H_1^2} + \frac{N^2}{H_1^2}\left(H_1 \sum_{h_1=1}^{H_1} |S_{z,h_1}|^2 \right)\\
 &\notag \ll \frac{N^4}{H_1^2} + \frac{N^2}{H_1}\left( \sum_{h_1=1}^{H_1} \left(\frac{N^2}{H_2} + \frac{N}{H_2} \sum_{h_2=1}^{H_2} |S_{z, h_1, h_2}|\right) \right)\\
 &\notag \ll \frac{N^4}{H_1^2} + \frac{N^4}{H_2} + \frac{N^3}{H_1H_2} \sum_{h_1=1}^{H_1}\sum_{h_2=1}^{H_2} \frac{1}{\alpha R_1^3 h_1 h_2 }.
\end{align}
It follows that 
\begin{align}
 \frac{|S_z|}{N} \ll\frac{1}{H_1^{\frac{1}{2}}} + \frac{1}{H_2^{\frac{1}{4}}} + \frac{1}{N^{\frac{1}{4}}}\left(\frac{\log H_1 \log H_2}{H_1H_2} \right)^{\frac{1}{4}} \left(\frac{1}{\alpha R_1^3} \right)^{\frac{1}{4}}.
 \end{align}
 Now choose $H_2 = H_1^2$, $H_1H_2 = H_1^3 = \frac{c}{\alpha R_1^3}$, and impose the restriction
$
 H_2  \asymp N,
$
to obtain
 \begin{align}
 \frac{|S_z|}{N}
 &\ll \frac{1}{H_1^{\frac{1}{2}}} + \frac{1}{H_2^{\frac{1}{4}}} + \frac{(\log H_1 \log H_2)^{\frac{1}{4}}}{N^{\frac{1}{4}}}.
\end{align}
This leads to $\left(\frac{R}{R_1} \right)^{\frac{3}{2}} R_1^3 \alpha \asymp 1$ or $R_1 \asymp \frac{1}{R \alpha^{\frac{2}{3}}}$.  Thus $N = R^2 \alpha^{\frac{2}{3}}$. Note that $N$ is bounded by a power of $\tau$ so that $\log H_1, \log H_2 \ll \log \tau$.  Combining our estimates thus far, and averaging over $z$
\begin{equation}
 \E_{y \in B_R} \left[\frac{1}{|\Disc(f+y)|^{i\tau}}\right] \ll \delta R^4 + \frac{(\log \tau)^{\frac{1}{2}}}{\alpha^{\frac{1}{6}} R^{\frac{1}{2}}}.
\end{equation}
Now apply $\delta = \frac{\tau T_1^{12}}{Y}$, $\alpha \gg \frac{\tau}{T_1^9 Y^{\frac{3}{4}}}$ to bound 
\begin{equation}
 \E_{y \in B_R} \left[\frac{1}{|\Disc(f+y)|^{i\tau}}\right] \ll \frac{\tau T_1^{12}}{Y} R^4 + \frac{T_1^{\frac{3}{2}}Y^{\frac{1}{8}}}{\tau^{\frac{1}{6}} R^{\frac{1}{2}}}(\log \tau)^{\frac{1}{2}}.
\end{equation}

Setting the two error terms equal leads to 
\begin{equation}
 R = \frac{Y^{\frac{1}{4}}}{\tau^{\frac{7}{27}} T_1^{\frac{7}{3}}}(\log \tau)^{\frac{1}{9}}
\end{equation}
and the bound $ \ll \frac{T_1^{\frac{8}{3}}}{\tau^{\frac{1}{27}}}(\log \tau)^{\frac{4}{9}}$.  

\end{proof}

For forms $f$ with $a=0$, set $f_d = f + (0,0,0,d)$.  For $|\Disc(f)| \asymp Y$ and $t_f \gg T_2$ we have the following estimate.  
\begin{lemma}\label{exponential_sum_reducibles_lemma}
 Assume that $\tau$ is bounded by a sufficiently small constant times $T_2^3 Y^{\frac{1}{4}}$.  With the choice $R = \frac{T_2^2 Y^{\frac{1}{2}}}{\tau^{\frac{2}{3}}}$, we have the bound
 \begin{equation}
  \E_{|d|\leq R}\left[|\Disc(f_d)|^{-i\tau} \right] \ll \frac{Y^{\frac{1}{2}}}{\tau^{\frac{1}{3}}T_2^2}.
 \end{equation}

\end{lemma}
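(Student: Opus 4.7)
The plan is to change variables $u = \Disc(f_d)$, reducing the oscillatory average to an elementary Mellin-type bound for $\int |u|^{-i\tau}\,du$; the key inputs are that $a=0$ linearizes the discriminant in $d$ and that the integer $b$ cannot vanish.

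First I would exploit $a=0$. The formula $\Disc(f) = b^2 c^2 - 4 a c^3 - 4 b^3 d' - 27 a^2 (d')^2 + 18 a b c d'$ collapses to $b^2 c^2 - 4 b^3 d'$, so
\[
\Disc(f_d) = \Disc(f) - 4 b^3 d
\]
is linear in $d$ with slope $-4b^3$. Moreover $|\Disc(f)| \asymp Y \geq 1$ forces $b \neq 0$, and since $b \in \zed$ we have $|b^3| \geq 1$. Substituting $u = \Disc(f_d)$ converts the integral to
\[
\int_{-R}^R |\Disc(f_d)|^{-i\tau}\, dd = \frac{1}{4|b^3|} \int_{c_1}^{c_2} |u|^{-i\tau}\, du,
\]
where $[c_1,c_2]$ is the image of $[-R,R]$, so $|c_j| \ll Y + |b^3|R$. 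The antiderivative $\sgn(u)\,|u|^{1-i\tau}/(1-i\tau)$ of $|u|^{-i\tau}$ is continuous across $u=0$ and bounded pointwise by $|u|/\tau$, so the $u$-integral is $\ll (Y + |b^3|R)/\tau$.

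Combining and using $|b^3| \geq 1$,
\[
\E_{|d| \leq R}\bigl[|\Disc(f_d)|^{-i\tau}\bigr] \ll \frac{Y}{|b^3|\,R\,\tau} + \frac{1}{\tau} \leq \frac{Y}{R\tau} + \frac{1}{\tau} = \frac{Y^{1/2}}{\tau^{1/3} T_2^2} + \frac{1}{\tau}
\]
after substituting $R = T_2^2 Y^{1/2}/\tau^{2/3}$. The second term is dominated by the first because the Iwasawa parametrization of $a=0$ forms in $V_\pm$ gives $|b| \asymp \lambda/t$ with $\lambda \asymp Y^{1/4}$, so integrality $b \in \zed \setminus \{0\}$ forces $t_f \ll Y^{1/4}$ and hence $T_2 \ll Y^{1/4}$, whence $Y^{1/2}/(\tau^{1/3} T_2^2) \gg 1/\tau^{1/3} \geq 1/\tau$. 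The only delicate point is allowing $\Disc(f_d)$ to vanish on the averaging interval, which is harmless because $|u|^{-i\tau}$ is locally integrable at the origin; there is no genuine obstacle in this argument, which is notably cleaner than the van der Corput analysis required when $a \neq 0$.
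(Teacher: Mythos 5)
Your proof is correct, and it takes a genuinely cleaner route than the paper's. The paper works with the phase $\tau\log|\Disc(f_d)|$: it Taylor-expands to first order, records the bound $\partial_d^2 \log|\Disc(f_d)| = O(1/(T_2^6 Y^{1/2}))$, imposes $R^2 = o(T_2^6 Y^{1/2}/\tau)$ so the quadratic error stays small, and then estimates the resulting linear phase $\E_{|d|\le R}\,e^{-i\tau\alpha d}$ by the standard $\min(1, 1/(R\alpha\tau))$ bound with $\alpha \gg 1/Y$. Balancing the linearization error $\tau R^2/(T_2^6 Y^{1/2})$ against $Y/(R\tau)$ gives the stated $R$. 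You instead observe that with $a = 0$ the discriminant itself, not just its logarithm, is exactly affine in the shift, $\Disc(f_d) = \Disc(f) - 4b^3 d$, so the substitution $u = \Disc(f_d)$ converts the average into the elementary integral $\frac{1}{8R|b|^3}\int_{c_1}^{c_2}|u|^{-i\tau}\,du$, which is evaluated exactly via the antiderivative $u|u|^{-i\tau}/(1-i\tau)$. This bypasses the Taylor error entirely (so no $R^2 = o(\cdot)$ constraint is needed), and the residual term $1/\tau$ that you produce is dominated by the main term because integrality of $b \ne 0$ together with $|b| \asymp Y^{1/4}/t_f$ forces $T_2 \ll Y^{1/4}$ --- a point you verify correctly. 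The two approaches give the same final bound; yours is tighter in spirit because it exploits the exact polynomial structure rather than treating it as a generic stationary-phase estimate, but this extra precision is not needed elsewhere in the paper since the lemma is stated for the specific $R$.
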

\begin{proof} We may assume that $\frac{Y^{\frac{1}{2}}}{\tau^{\frac{1}{3}} T_2^2} \ll 1$ since otherwise the claim is trivial.

Recall that at $a = 0$, $\Disc(f) = b^2c^2 - 4b^3 d$. Thus $\frac{\partial}{\partial d} \log |\Disc(f)| \asymp \frac{b^3}{Y} .$ Notice that by the homogeneity in $\lambda$ and $t$, $1 \ll b \frac{Y^{\frac{1}{4}}}{T_2}$. Also, 
 \begin{equation}
  \frac{\partial^2}{\partial d^2} \log |\Disc(f)| = O\left(\frac{b^6}{Y^2} \right) = O\left(\frac{1}{T_2^6 Y^{\frac{1}{2}}} \right).
 \end{equation}
Impose the constraint $R^2 = o\left(\frac{T_2^6 Y^{\frac{1}{2}}}{\tau} \right)$ and Taylor expand the exponent to degree 2 in the average.  With $\alpha = \frac{\partial}{\partial d} \log |\Disc(f)|$, $\frac{1}{Y} \ll \alpha \ll \frac{1}{T_2^3 Y^{\frac{1}{4}}}$,
\begin{align}
 \left|\E_{|d|\leq R}\left[|\Disc(f_d)|^{-i\tau} \right] \right| = \left| \E_{|d|\leq R} e^{-i\tau \alpha d}\right| + O\left(\frac{\tau R^2}{T_2^6 Y^{\frac{1}{2}}} \right).
 \end{align}
 Since $\alpha \tau \ll 1$ by the constraint of the Lemma, using the bound for the sum of a linear phase we obtain 
 \begin{align}  \left|\E_{|d|\leq R}\left[|\Disc(f_d)|^{-i\tau} \right] \right|\ll  \frac{1}{R\alpha \tau} + \frac{\tau R^2}{T_2^6 Y^{\frac{1}{2}}}. 
\end{align}

Choose optimally, $R^3 = \frac{T_2^6 Y^{\frac{1}{2}}}{\alpha \tau^2}$.  The worst bound occurs by minimizing $\alpha$, which obtains $R = \frac{T_2^2 Y^{\frac{1}{2}}}{\tau^{\frac{2}{3}}}$ and an estimate for the exponential sum of
$\ll \frac{Y^{\frac{1}{2}}}{\tau^{\frac{1}{3}}T_2^2}$.  
\end{proof}

\section{Proof of Theorem \ref{untwisted_theorem}}

We begin the proof of Proposition \ref{afe_proposition} by introducing the averaging technique of \cite{BST13}. Let $n_{-} = 1$, $n_+ =3$ be the multiplicity with which $G^+ \cdot f_{\pm}$ covers $V_{\pm}$.  Let $F$ be the smooth partition of unity function on $\SL_2(\bR)$, supported on a Siegel set, right $K$ invariant, such that $\sum_{\gamma \in \Gamma} F(\gamma g) = 1$.  Let $H_{\pm}$ be a maximal subset of $G^+$ so that $H_\pm \cdot f_{\pm} = B \cap V_{\pm}$.  Define
\begin{equation}
 \Sigma =\sum_{f \in \Gamma \backslash V_{\pm}} \frac{1}{|\Stab(f)|} \frac{1}{|\Disc(f)|^{\frac{1}{2}+i\tau}} V_{\frac{1}{2} + i\tau}\left(\frac{|\Disc(f)|}{\sqrt{432}}  \right).
\end{equation}

Then as in \cite{BST13}, p.458 eqn. (19), 
\begin{align}
 \Sigma&= \frac{1}{n_{\pm}M_{\pm}} \int_{v \in B \cap V_{\pm}} \frac{\omega(v)dv}{|\Disc(v)|}\sum_{f \in V_\zed\cap V_{\pm}} \frac{V_{\frac{1}{2} + i\tau}\left(\frac{|\Disc(f)|}{\sqrt{432}}  \right)}{|\Disc(f)|^{\frac{1}{2}+i\tau}} \sum_{g \cdot v = f} F(g)
  \\ \notag
 &= \frac{1}{n_{\pm}M_{\pm}} \sum_{f \in V_\zed\cap V_{\pm}} \frac{V_{\frac{1}{2} + i\tau}\left(\frac{|\Disc(f)|}{\sqrt{432}}  \right)}{|\Disc(f)|^{\frac{1}{2}+i\tau}}\int_{g \in \SL_2(\bR)} \int_0^\infty \sum_{h \in H_{\pm}: f = d_\lambda gh \cdot f_{\pm}}\omega(h\cdot f_{\pm})F(g) dg \frac{d\lambda}{\lambda}
\end{align}
where $M_{\pm} = \int_{B \cap V_{\pm}} \frac{\omega(v)dv}{|\Disc(v)|}$.

Next introduce a smooth partition of unity to control the size of the discriminant.  Let $\sigma \in C_c^\infty(\bR^+)$ satisfy $\sigma \geq 0$ and 
\begin{equation}
 \sum_{n \geq A} \sigma\left(\frac{x}{2^n}\right) = 1, \qquad x \geq 1.
\end{equation}
Thus 
\begin{align}
 \Sigma=&\frac{1}{n_{\pm}M_{\pm}} \sum_{f \in V_\zed\cap V_{\pm}}\sum_{n \geq A} \sigma\left(\frac{|\Disc(f)|}{2^n}\right) \frac{V_{\frac{1}{2} + i\tau}\left(\frac{|\Disc(f)|}{\sqrt{432}}  \right)}{|\Disc(f)|^{\frac{1}{2}+i\tau}}\\ \notag &\times\int_{g \in \SL_2(\bR)} \int_0^\infty \sum_{h \in H_{\pm}: f = d_\lambda gh \cdot f_{\pm}}\omega(h \cdot f_{\pm})F(g) dg \frac{d\lambda}{\lambda}.
\end{align}
We may assume $Y > \tau^{2 - \frac{2}{99}}$, bounding the initial part of the sum trivially, with acceptable error.

Next truncate in $t$.  Let $T_1 = T_1(Y)$ and $T_2 = T_2(Y)$ be parameters.  Recall that $f = n_u a_t k_\theta \cdot f_{\pm}$.  By Lemma \ref{large_T_lemma} the expected number of classes of forms $f$ with $a \neq 0$ and $t > T_1$, $|\Disc(f)| \asymp Y$ is $O\left(\frac{Y}{T_1} + Y^{\frac{5}{6}} \right)$, while the number of forms with $a= 0$ and $t < T_2$ is $O\left(Y^{\frac{3}{4}} T_2 \right)$.  Let $\Sigma_1(Y)$ indicate the sum over forms with $a \neq 0$ and $t \leq T_1$ and $\Sigma_2(Y)$ the sum over $a = 0$ with $t \geq T_2$.  Thus
\begin{equation}
 \Sigma(Y) = \Sigma_1(Y) + \Sigma_2(Y) +  O\left(\left(\frac{\sqrt{Y}}{T_1} + Y^{\frac{1}{3}} + Y^{\frac{1}{4}}T_2\right)\left(1 + \frac{Y}{\tau^2}\right)^{-A} \right).
\end{equation}

In $\Sigma_1(Y)$, introduce a space average.   Let $R = R(Y)$ be a parameter satisfying $R \ll \frac{Y^{\frac{1}{4}}}{T_1^3}$ and let $B_R = \{y \in V_\zed: \|y\|_\infty \leq R\}$.  Note that by Lemmas \ref{f_variation_lemma} and \ref{t_variation_lemma}, for $y \in B_R$, $|\Disc(f+y)| \asymp |\Disc(f)|$ and $t_{f+y} \asymp t_f$.  The averaged sum is, with $'$ indicating $a \neq 0$ and $t \leq T_1$,
\begin{align}
  \Sigma_1'(Y)=&\frac{1}{n_{\pm}M_{\pm}} {\sum_{f \in V_\zed \cap V_{\pm}}}'\E_{y \in B_{R(Y)}} \sigma\left(\frac{|\Disc(f+y)|}{Y}\right) \frac{V_{\frac{1}{2} + i\tau}\left(\frac{|\Disc(f+y)|}{\sqrt{432}}  \right)}{|\Disc(f+y)|^{\frac{1}{2}+i\tau}}\\& \notag\times\int_{g \in \SL_2(\bR)} \int_0^\infty \sum_{h \in H_{\pm}: f+y = d_\lambda gh \cdot f_{\pm}} \omega(h \cdot f_{\pm})F(g) dg \frac{d\lambda}{\lambda}.
\end{align}
Notice that in $\Sigma_1'(Y)$, the sum over $f$ is restricted to forms $f$ with $t \leq T_1$, which is different than the condition $t_{f+y} \leq T_1$.  This introduces a difference between $\Sigma_1(Y)$ and $\Sigma_1'(Y)$. 
We have, 
$|\Sigma_1(Y) - \Sigma_1'(Y)| =  O\left(\left(\frac{\sqrt{Y}}{T_1} + Y^{\frac{1}{3}}\right)\left(1+\frac{Y}{\tau^2} \right)^{-A}  \right),$ since the forms where the two sums differ have $ |\Disc(f)| \asymp Y$ and $t_f \asymp T_1$.

Let
\begin{equation}
 W(f) = \int_{g \in \SL_2(\bR)} \int_0^\infty \sum_{h \in H_{\pm}: f = d_\lambda g h \cdot f_{\pm}}\omega(h \cdot f_{\pm}) F(g)dg \frac{d\lambda}{\lambda}.
\end{equation}

\begin{lemma}\label{weight_function_lemma}
 Let $v \in \bR^4$ be a unit vector.  For $f = d_\lambda n_u a_t k_\theta \cdot f_{\pm}$ with $\lambda \geq 1$ and $t \gg 1$, we have the bound for partial derivatives,
 \begin{equation}
  \partial_v \frac{V_{\frac{1}{2} + i\tau}\left(\frac{|\Disc(f)|}{\sqrt{432}} \right)\sigma\left(\frac{|\Disc(f)|}{Y} \right)W(f)}{|\Disc(f)|^{\frac{1}{2}}} \ll \frac{t^5}{\lambda^3} \left(1 + \frac{|\Disc(f)|}{\tau^2}\right)^{-A}. 
 \end{equation}

\end{lemma}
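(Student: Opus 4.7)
The plan is to apply the product rule to $V\sigma W/|\Disc(f)|^{1/2}$ and bound the four resulting terms. For three of them the estimates are immediate from earlier lemmas: using $\partial_v \Disc(f)=O(\lambda^3 t^3)$ from Lemma~\ref{derivative_lemma} together with Lemma~\ref{V_lemma}, and recalling $|\Disc(f)|\asymp\lambda^4$, the chain rule yields
\begin{equation*}
\partial_v V \ll \tfrac{t^3}{\lambda}\bigl(1+\tfrac{y}{\tau^2}\bigr)^{-A},\qquad \partial_v\sigma \ll \tfrac{t^3}{\lambda},\qquad \partial_v|\Disc(f)|^{-1/2}\ll\tfrac{t^3}{\lambda^3},
\end{equation*}
where $y=|\Disc(f)|/\sqrt{432}$. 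Combined with the pointwise bounds $V\ll(1+y/\tau^2)^{-A}$, $\sigma\ll 1$, $W\ll 1$ (from the compact effective support of the defining integral), and $|\Disc(f)|^{-1/2}\asymp\lambda^{-2}$, each of these three product-rule contributions is $\ll (t^3/\lambda^3)(1+y/\tau^2)^{-A}$, which is absorbed in the target $t^5/\lambda^3$ since $t\gg 1$.

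The fourth contribution $V\sigma(\partial_v W)|\Disc(f)|^{-1/2}$ requires a bound on $\partial_v W$ in rectangular coordinates. The decisive observation is that $W$ is \emph{dilation-invariant}, $W(\mu_0 f)=W(f)$ for all $\mu_0>0$: this follows from the defining integral via the substitution $\lambda'\mapsto\lambda'/\mu_0$, which preserves the measure $d\lambda'/\lambda'$ and absorbs the scaling inside $d_{\lambda'}^{-1}g^{-1}\cdot\mu_0 f = d_{\lambda'/\mu_0}^{-1}g^{-1}\cdot f$. Consequently $W$ depends on $f$ only through its Iwasawa coordinates $(u,t,\theta)$, so
\begin{equation*}
\partial_v W=(\partial_u W)\,\partial_v u+(\partial_t W)\,\partial_v t+(\partial_\theta W)\,\partial_v \theta,
\end{equation*}
and Lemma~\ref{Jacobian_lemma} controls the three rightmost factors.

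To estimate the Iwasawa-coordinate derivatives of $W$, I would perform the substitution $\mu=\lambda/\lambda'$, $\tilde g=g^{-1}\tilde h$ (where $\tilde h\in\SL_2(\bR)$ is the $\SL_2$-part of $g_f$) in the defining integral to rewrite
\begin{equation*}
W(f)=\int_{\SL_2(\bR)}\int_0^\infty \omega(d_\mu\tilde g\cdot f_\pm)\,F(\tilde h\tilde g^{-1})\,\tfrac{d\mu}{\mu}\,d\tilde g.
\end{equation*}
The support of $\omega$ forces $\mu\in[1,O(1)]$ and $\tilde g$ into a bounded subset of $\SL_2(\bR)$ (since $\|\tilde g\cdot f_\pm\|_3$ must be $O(1)$). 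Differentiating under the integral, I would Iwasawa-decompose $\tilde h\tilde g^{-1}=n_U a_T k_\Theta$ and exploit the commutation $a_t n_r=n_{t^2 r}a_t$ to write $(U,T,\Theta)$ explicitly as functions of $(u,t,\theta)$ with $\tilde g$-dependent coefficients; the smoothness of $F$ in its own Iwasawa variables then yields the sought bounds on $\partial_u W$, $\partial_t W$, $\partial_\theta W$. Combined with Lemma~\ref{Jacobian_lemma} and the factor $|\Disc(f)|^{-1/2}\asymp\lambda^{-2}$, this produces the fourth product-rule contribution within the claimed $t^5/\lambda^3$. The main obstacle is precisely this last step: tracking competing $t$-powers generated by the Iwasawa change of variables and the Jacobian, which is what is ultimately responsible for the $t^5$ rather than $t^3$ appearing in the bound.
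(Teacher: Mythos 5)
Your proposal is correct and follows essentially the same route as the paper: differentiate the product, observe that the three easy factors each contribute $O(t^3/\lambda^3)(1+|\Disc(f)|/\tau^2)^{-A}$, and reduce the delicate $\partial_v W$ term to bounded derivatives of $F$ in its Iwasawa variables combined with the Jacobian estimates of Lemma~\ref{Jacobian_lemma}, yielding $\partial_v W \ll t^5/\lambda$. Your dilation-invariance observation is exactly the $\lambda$-invariance under which $F$ was extended to $G^+$, and the decomposition $\tilde h\tilde g^{-1}=n_U a_T k_\Theta$ with $U=u_f+t_f^2u''$, $T=t_f t''$ is precisely the implicit content of the paper's line ``$\partial_v F = \frac{\partial F}{\partial u}\partial_v u + \frac{\partial F}{\partial t}\partial_v t$ and bound $\partial_v u \ll t^5/\lambda$, $\partial_v t \ll t^4/\lambda$.''
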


\begin{proof}
Since we can write $W(f)$ as a group convolution, we can pass the derivative inside the integral, and thus we bound 
 \begin{align}
  \partial_v W(f) &=  \int_{h \in H_{\pm}, d_\lambda gh \cdot f_{\pm} = f} \omega(h \cdot f_{\pm})  \partial_v F(g) dh. 
 \end{align}
 Notice that $H_{\pm}$ is $K$-invariant, so we can eliminate the dependence on $\theta$.  Also, the integral is $\lambda$-invariant, so the derivative depends only on $u$ and $t$.  Thus write
 $\partial_v F = \frac{\partial{F}}{\partial u} \partial_v u + \frac{\partial{F}}{\partial t} \partial_v t$ and bound $\partial_v u \ll \frac{t^5}{\lambda}$, $\partial_v t \ll \frac{t^4}{\lambda}$ to bound $\partial_v W \ll \frac{t^5}{\lambda}$.  The remainder of the claim follows from the estimate for the derivatives of $V$ in Lemma \ref{V_lemma}, together with the bound for the partial derivatives of the discriminant.
\end{proof}
Let 
\begin{align}
 \Sigma_1''(Y) &= \frac{1}{n_{\pm}M_{\pm}} {\sum_{f}}' \sigma\left(\frac{|\Disc(f)|}{Y} \right)\frac{V_{\frac{1}{2}+i\tau}\left(\frac{|\Disc(f)|}{\sqrt{432}} \right)W(f)}{|\Disc(f)|^{\frac{1}{2}}}\E_{y \in B_{R(Y)}}\left[|\Disc(f+y)|^{-i\tau} \right].
\end{align}
Applying the derivative bound,
\begin{equation}\label{derivative_bound}
 |\Sigma_1'(Y)-\Sigma_1''(Y)| \ll Y^{\frac{1}{4}} T_1^5 R \left(1 + \frac{Y}{\tau^2}\right)^{-A}.
\end{equation}
By the bound for exponential sums in Lemma \ref{exponential_sum_lemma}, with $R = \frac{Y^{\frac{1}{4}}}{\tau^{\frac{7}{27}} T_1^{\frac{7}{3}}} $, \begin{equation}|\Sigma_1''(Y)| \ll \frac{T_1^{\frac{8}{3}} \sqrt{Y}}{\tau^{\frac{1}{27}-\epsilon}}\left(1 + \frac{Y}{\tau^2}\right)^{-A}.\end{equation}
Notice that, for the choice of $R$, this dominates the error in (\ref{derivative_bound}).

It follows that
\begin{align}
\Sigma_1(Y)= O\left(\left(\frac{\sqrt{Y}}{T_1} + Y^{\frac{1}{3}} +  \frac{T_1^{\frac{8}{3}}\sqrt{Y}}{\tau^{\frac{1}{27}-\epsilon}}\right)\left(1 + \frac{Y}{\tau^2}\right)^{-A}\right).
\end{align}
Choose, optimally, $T_1 = \min\left(Y^{\frac{1}{24}}, \tau^{\frac{1}{99}} \right)$.  Summed over $Y = 2^n$ obtains a bound of $\Sigma_1 = O\left(\tau^{\frac{98}{99}+\epsilon}\right)$.

We next bound $\Sigma_2$.  
Let $R = o(Y^{\frac{1}{4}}T_2^3)$.  Write $f_d = f+(0,0,0,d)$. Using the Jacobian estimates for change of coordinates,
\begin{equation}
 \frac{\partial t}{\partial d} = O\left(\frac{1}{\lambda t^2} \right), \qquad \frac{\partial \lambda}{\partial d} = O\left(\frac{1}{t^3} \right),
\end{equation}
and hence  for $|d| \leq R$, 
\begin{equation}|\Disc(f_d) - \Disc(f)| = O(RY^{\frac{3}{4}}T_2^{-3}) = o(Y)\end{equation} so $|\Disc(f_d)| \asymp |\Disc(f)|$ and similarly $t_{f_d} \asymp t_f$.  
Let
\begin{equation}
 \Sigma_2'(Y) = \frac{1}{n_{\pm}M_{\pm}} \sum_{\substack{f \in V_\zed \cap V_{\pm}\\ a = 0, t \geq T_2}} \E_{|d| \leq R} \left[\sigma\left(\frac{|\Disc(f_d)|}{Y} \right)\frac{V_{\frac{1}{2} + i\tau}\left(\frac{|\Disc(f_d)|}{\sqrt{432}} \right)W(f_d)}{|\Disc(f_d)|^{\frac{1}{2}+i\tau}} \right] .
\end{equation}
Then $|\Sigma_2'(Y) -\Sigma_2(Y)| = O\left(Y^{\frac{1}{4}}T_2\left(1+\frac{Y}{\tau^2} \right)^{-A} \right)$, since the forms where the two sums differ have $t_f \asymp T_2$ and $|\Disc(f)| \asymp Y$.

We next estimate the derivative of the weight function with respect to $d$.

\begin{lemma}\label{weight_function_lemma_d}
   For $f = d_\lambda n_u a_t k_\theta \cdot f_{\pm}$ with $\lambda \geq 1$ and $t \gg 1$, and $a = 0$.  We have the bound for partial derivatives,
 \begin{equation}
  \partial_d \frac{V_{\frac{1}{2} + i\tau}\left(\frac{|\Disc(f)|}{\sqrt{432}} \right)\sigma\left(\frac{|\Disc(f)|}{Y} \right)W(f)}{|\Disc(f)|^{\frac{1}{2}}} \ll \frac{1}{\lambda^3T_2^3} \left(1 + \frac{|\Disc(f)|}{\tau^2}\right)^{-A}. 
 \end{equation}

\end{lemma}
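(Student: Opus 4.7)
The plan is to differentiate the composite function by the product rule and bound each resulting term using the improved derivative estimates available in the $a=0$ case, paralleling the proof of Lemma \ref{weight_function_lemma}.

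Since the coefficient $a = \frac{\lambda s(\theta)}{\sqrt{2}\, t^3}$ in the $V_-$ case (and analogously $a = \frac{\lambda s(3\theta)}{108^{1/4}\, t^3}$ for $V_+$) vanishes exactly when $\theta$ lies in a discrete set, assuming $a = 0$ forces $\theta = 0$. Moreover, varying $d$ keeps $a = 0$, so $\partial_d \theta = 0$ throughout the computation. Solving the remaining coordinate relations $b = \lambda/(\sqrt{2}\,t)$, $c = \sqrt{2}\,\lambda u/t$, $d = \lambda u^2/(\sqrt{2}\,t) + \lambda t^3/\sqrt{2}$ (with $b$ and $c$ held fixed) yields $\partial_d u = 0$ in addition to the stated $\partial_d t = O(1/(\lambda t^2))$ and $\partial_d \lambda = O(1/t^3)$. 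Consequently $|\Disc(f)| = \lambda^4$ and $\partial_d |\Disc(f)| = O(\lambda^3/t^3)$.

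Applying the product rule to $V \sigma W/|\Disc|^{1/2}$: the contributions from differentiating $V$, $\sigma$, or $|\Disc|^{-1/2}$ each produce a relative derivative of order $\partial_d|\Disc|/|\Disc| = O(1/(\lambda t^3))$, which combined with the uniform size bound $V\sigma W/|\Disc|^{1/2} \ll \lambda^{-2}(1 + |\Disc(f)|/\tau^2)^{-A}$ (Lemma \ref{V_lemma} together with boundedness of $\sigma$ and $W$) yields $O(\lambda^{-3} t^{-3}(1 + |\Disc(f)|/\tau^2)^{-A})$. For $\partial_d W$, proceed as in Lemma \ref{weight_function_lemma}: write $\partial_d W = \int_h \omega(h \cdot f_\pm)\, \partial_d F(g)\, dh$ and expand via the chain rule $\partial_d F = F_u \partial_d u_g + F_t \partial_d t_g$ through the Iwasawa coordinates of $g$. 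The improved estimates above (in place of the generic $O(t^5/\lambda)$ and $O(t^4/\lambda)$ bounds from the Jacobian of Lemma \ref{Jacobian_lemma}) yield $\partial_d W \ll 1/(\lambda t^3)$, contributing at the required order.

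Combining the contributions and using $t \geq T_2$ produces the claimed bound $\frac{1}{\lambda^3 T_2^3}(1 + |\Disc(f)|/\tau^2)^{-A}$. The crucial point is the vanishing of $\partial_d u$ in the $a = 0$ setting, which eliminates what would otherwise be the dominant $F_u \partial_d u$ contribution to $\partial_d W$; this is what saves the factor of $t^8$ over the estimate of Lemma \ref{weight_function_lemma}, and verifying that this saving propagates correctly through the chain rule is the only delicate point in an otherwise routine verification.
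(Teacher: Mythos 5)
Your proposal takes a genuinely different route from the paper for the key estimate $\partial_d W$: you follow the method of Lemma \ref{weight_function_lemma} and differentiate $F(g)$ through the chain rule in the Iwasawa coordinates of $g$, whereas the paper instead writes $W(f)=\int_{g\cdot v=f}F(g)\,\omega(v)\,dv/|\Disc(v)|$, keeps $g$ (and $\lambda$) as the integration variables, and differentiates $\omega(v)$: since $\omega$ is $K$-invariant one may take $v=(d_\lambda n_u a_t)^{-1}\cdot f$, and then $\partial_d v=(d_\lambda n_u a_t)^{-1}e_4=e_4/(\lambda t^3)$ exactly, giving $\partial_d W\ll 1/(\lambda t^3)$ in one line. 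The paper's choice is not cosmetic; your route hits a genuine gap.

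The gap is that your improved derivatives $\partial_d u=0$, $\partial_d\theta=0$, $\partial_d t=O(1/(\lambda t^2))$ are computed for the homogeneous coordinates of $f$ relative to the fixed base point $f_\pm$ (you explicitly solve $b=\lambda/(\sqrt2\,t)$, $c=\sqrt2\,\lambda u/t$, $d=\dots$ with $v=f_-$). But in $\partial_d W=\int_h\omega(h\cdot f_\pm)\,\partial_d F(g)\,dh$ the group element $g$ takes a \emph{moving} point $v=h\cdot f_\pm\in B$ to $f$, so its Iwasawa coordinates are $u_g=u_f+t_f^2\,u''(v)$, $t_g=t_f\,t''(v)$ with $u''(v),t''(v)$ the Iwasawa coordinates of $g_v^{-1}$. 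Differentiating, $\partial_d u_g=2t_f(\partial_d t_f)u''(v)=O(1/(\lambda t))$, which is \emph{not} zero for a general $v\in B$ (it vanishes only when $u''(v)=0$, e.g.\ $v=f_\pm$). The claimed killing of the $F_u\,\partial_d u$ term therefore does not occur. Moreover, even granting your stated estimates, the chain rule yields $\partial_d F=F_t\,\partial_d t_g=O(1/(\lambda t^2))$, not $O(1/(\lambda t^3))$ as asserted; there is no visible source for the missing factor of $t$. Plugging the resulting bound $\partial_d W\ll 1/(\lambda t)$ (or even the optimistic $1/(\lambda t^2)$) into the final estimate produces at best $1/(\lambda^3 T_2)$ or $1/(\lambda^3 T_2^2)$ in place of $1/(\lambda^3 T_2^3)$, which is too weak for the choice $T_2=Y^{1/4}/\tau^{1/9}$ used in bounding $\Sigma_2$. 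The fix is to switch to the paper's parametrization and differentiate the $K$-invariant $\omega$, for which $\partial_d v$ lies entirely in the $e_4$ direction with the exact coefficient $1/(\lambda t^3)$.
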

\begin{proof}
 We have $\frac{\partial}{\partial d} |\Disc(f)| = b^3 = O\left(\left(\frac{\lambda}{T_2}\right)^3 \right)$.  Meanwhile,
 \begin{equation}
  \frac{\partial}{\partial d} W(f) = \frac{\partial}{\partial d} \int_{g \cdot v = f}F(g) \omega(v) \frac{dv}{|\Disc(v)|}.
 \end{equation}
If $d_\lambda n_u a_t \cdot v = f$ then $\frac{\partial}{\partial d} v = O\left(\frac{1}{\lambda t^3} \right)$.  Pass this estimate for the derivative under the integral to obtain the same estimate for the derivative of the integral.  Combining these estimates proves the claim.
\end{proof}

Let 
\begin{equation} 
\Sigma_2''(Y) = \frac{1}{n_{\pm}M_{\pm}} \sum_{\substack{f \in V_\zed \cap V_{\pm}\\ a = 0, t \geq T_2}} \sigma\left(\frac{|\Disc(f)|}{Y} \right)\frac{V_{\frac{1}{2} + i\tau}\left(\frac{|\Disc(f)|}{\sqrt{432}} \right)W(f)}{|\Disc(f)|^{\frac{1}{2}}} \E_{|d| \leq R}\left[|\Disc(f_d)|^{-i\tau} \right].
\end{equation}
By the estimate for the derivative of the weight function,
\begin{equation}
 \left|\Sigma_2'(Y) - \Sigma_2''(Y)\right| \ll \frac{Y^{\frac{1}{4}}R}{T_2^3} \left(1 + \frac{Y}{\tau^2} \right)^{-A}.
\end{equation}
By the estimate for exponential sums in Lemma \ref{exponential_sum_reducibles_lemma}, subject to the constraint $\tau = o(T_2^3 Y^{\frac{1}{4}})$
\begin{equation}
 |\Sigma_2''(Y)| \ll \frac{Y}{\tau^{\frac{1}{3}}T_2^2} \left(1 + \frac{Y}{\tau^2}\right)^{-A}.
\end{equation}

Collecting together the error terms, it follows that $\Sigma_2(Y)$ has the bound
\begin{equation}
 \Sigma_2(Y) \ll \left(Y^{\frac{1}{4}}T_2+\frac{Y^{\frac{3}{4}}}{\tau^{\frac{2}{3}}T_2} + \frac{Y}{\tau^{\frac{1}{3}}T_2^2} \right)\left(1 + \frac{Y}{\tau^2}\right)^{-A}.
\end{equation}
Choose $T_2 = \frac{Y^{\frac{1}{4}}}{\tau^{\frac{1}{9}}}$.  In the constraint $\tau = o(T_2^3 Y^{\frac{1}{4}})$ this entails $\tau^{\frac{4}{3}} = o(Y)$, which is satisfied.  We thus obtain
\begin{equation}
 \Sigma_2(Y) \ll \frac{Y^{\frac{1}{2}}}{\tau^{\frac{1}{9}}} \left(1 + \frac{Y}{\tau^2}\right)^{-A}.
\end{equation}
Summing in $Y=2^n$ obtains $\Sigma_2 \ll \tau^{\frac{8}{9}}$.  Combined with the estimate for $\Sigma_1$, this proves the Theorem.

\bibliographystyle{plain}

\end{document}